\providecommand{\U}[1]{\protect\rule{.1in}{.1in}}
\newtheorem{theorem}{Theorem}[section]
\newtheorem{claim}[theorem]{Claim}
\newtheorem{corollary}[theorem]{Corollary}
\newtheorem{example}[theorem]{Example}
\newtheorem{lemma}[theorem]{Lemma}
\newtheorem{problem}[theorem]{Problem}
\newtheorem{proposition}[theorem]{Proposition}
\begin{document}
\title{Continuum Nash Bargaining Solutions}
\author{Micah Warren}
\address{University of Oregon \\
Fenton Hall\\
University of Oregon\\
Eugene, OR 97403}
\email{micahw@uoregon.edu}
\thanks{The author's work was supported in part by the NSF via DMS-1161498}
\maketitle

\begin{abstract}
Nash`s classical bargaining solution suggests that $N$ players in a
non-cooperative bargaining situation should find a solution that maximizes the
product of each player's utility functions. We consider a special case:
Suppose that the players are chosen from a continuum distribution $\mu$ and
suppose they are to divide up a resource $\nu$ that is also on a continuum.
\ The utility to each player is determined by the exponential of a distance
type function. The maximization problem becomes an optimal transport type
problem, where the target density is the minimizer to the functional
\[
F(\beta)=H_{\nu}(\beta)+W^{2}(\mu,\beta)
\]
where $H_{\nu}(\beta)$ is the entropy and $W^{2}$ is the 2-Wasserstein
distance. This minimization problem is also solved in the
Jordan-Kinderlehrer-Otto scheme. Thanks to optimal transport theory, the
solution may be described by a potential that solves a fourth order nonlinear
elliptic PDE, similar to Abreu's equation. \ Using the PDE, we prove solutions
are smooth when the measures have smooth positive densities. \ 

\end{abstract}

\section{Introduction}

In the 1950's \cite{Nash} John Nash characterized a solution to the bargaining
problem that has since been central to the theory. \ Namely, the Nash
bargaining solution is the allocation that maximizes the product of the
utilities to each player, over the total space of possible allocations of a
surplus. \ The Nash bargaining solution is not only mathematically natural but
can be achieved by strategic approaches, see \cite{BRW}. \ $\ \ $For more on
the economic theory, see \cite{Muthoo}.

In this paper, we consider utility functions that are given as an integral of
a utility density function, namely
\begin{equation}
U_{i}\left(  \nu_{i}\right)  =\int_{Y}s(p_{i},y)d\nu_{i}(y).\label{U1}%
\end{equation}
Here $Y$ is a surplus space$,$ $X$ is a player space, $\nu_{i}$ is a measure
on the space $Y$ that determines the allocation of resources to player
$p_{i}\in X,$ and
\[
s:X\times Y\rightarrow%
\mathbb{R}
^{+}%
\]
is a utility density function. \ More details are found in section 2. \ 

We begin by extending an observation of Schumacher \cite{Schumacher} to
continua.\ \ For utility of the form
\[
s(x,y)=e^{-c(x,y)}%
\]
for a cost function $c,$ Nash bargaining solutions will be optimal transport
plans between a measure $\mu$ which is known a priori$,$ and some measure
$\beta$ which is determined by the solution. \ Thus, for the purpose of
maximizing the product of the utility functions $U_{i}$, we need only to
parameterize a space of measures on the resource space, or equivalently, a set
of Kantorivich potentials. \ This allows us to do analysis in the spirit of
Benamou-Carlier-M\'{e}rigot-Oudat \cite{BCMO}, in order to show that discrete
solutions have continuous limits.

We consider the particular case when the spaces $X,Y$ $\subset%
\mathbb{R}
^{n}$ are both convex compact domains, players are distributed according to a
probability measure $\mu$, and surplus is distributed according to a measure
$\nu,$ both of which are absolutely continuous with respect to Lebesgue
measure, and the utility density is given by
\[
s(x,y)=e^{-|x-y|^{2}/2}.
\]
Following the arguments in \cite{BCMO}, we find that the limit of solutions
can be described by a gradient mapping, whose potential satisfies a fourth
order PDE. \ \ The problem becomes equivalent to minimizing
\begin{equation}
G(\varphi)=\int_{X}\ln\left(  \frac{\det D^{2}\varphi(x)}{\nu(\nabla
\varphi(x))}\right)  d\mu(x)+\frac{1}{2}\int_{X}\left\Vert x-\nabla
\varphi\right\Vert ^{2}d\mu,\label{ELF}%
\end{equation}
which gives a fourth order nonlinear PDE similar to Abreu's equation
\cite{Donaldson}. \ In particular, the quantity $\det D^{2}\varphi(x)$
satisfies a second order elliptic equation. \ In Section 4 we show that $\det
D^{2}\varphi(x)$ is bounded and H\"{o}lder continuous; from here we can apply
Caffarelli's Schauder theory for Monge-Amp\`{e}re equations. \ Smoothness will
follow by the general Schauder theory, see Section 5. \ \ \ The statements
about the boundedness and H\"{o}lder continuity should hold more generally,
for minimizers of functionals of the form
\begin{equation}
\min_{\beta\in P(Y)}\left\{  W^{2}(\mu,\beta)+\int_{Y}f\left(  \frac{d\beta
}{d\nu}\right)  d\nu\right\}  \label{ELF2}%
\end{equation}
where $f$ \ satisfies some convexity and other conditions.

Our main result (stated roughly for now) is as follows

\begin{theorem}
Under appropriate conditions on the measures $\mu,\nu$ and for
$c(x,y)=\left\vert x-y\right\vert ^{2}/2,$ solutions to finite player Nash
bargaining solutions converge to a continuum Nash bargaining solution.
\end{theorem}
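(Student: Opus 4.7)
I would adapt the strategy of Benamou-Carlier-M\'erigot-Oudat \cite{BCMO}: reformulate the finite $N$-player bargaining problem as a semidiscrete entropy-transport problem, then pass to the continuum limit via $\Gamma$-convergence. For $N$ players at positions $p_1^N,\dots,p_N^N$ whose empirical measures $\mu_N$ converge weakly to $\mu$, Lagrange multipliers applied to $\sum_i \log U_i(\nu_i)$ under $\sum_i \nu_i = \nu$ yield first-order conditions $e^{-|p_i-y|^2/2}/U_i = \lambda(y)$ on $\mathrm{supp}(\nu_i^N)$. These force each optimal $\nu_i^N$ to be the restriction of $\nu$ to a Laguerre cell $A_i^N$ determined by dual potentials $\psi_i^N = -\log U_i + |p_i|^2/2$, so the Nash bargaining solution is a semidiscrete optimal transport plan from $\mu_N$ to a target measure $\beta_N$ on $Y$, consistent with the Schumacher observation quoted in the introduction.

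Next, I would derive a discrete JKO-type identity. Expanding the integrals $U_i = \int_{A_i^N} e^{-|p_i-y|^2/2}\,d\nu(y)$ via a Laplace-style analysis when the cells are small yields
\[
-\frac{1}{N}\sum_i \log U_i(\nu_i^N) = H_\nu(\beta_N) + \tfrac{1}{2} W^2(\mu_N, \beta_N) + o(1),
\]
matching the continuum functional (\ref{ELF2}) with $f(r)=r\log r$ up to an additive constant. Thus maximizing the Nash product is asymptotically equivalent to minimizing $F_N := \tfrac{1}{2}W^2(\mu_N,\cdot) + H_\nu(\cdot)$ over suitable discrete target measures on $Y$.

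To pass to the continuum, I would establish $\Gamma$-convergence of $F_N$ to $F$ in the weak topology of probability measures on the compact set $Y$. The $\liminf$ inequality follows from weak lower semicontinuity of the relative entropy and joint continuity of $W^2$ along $(\mu_N,\beta_N)\to(\mu,\beta^*)$, while a recovery sequence is produced by discretizing a smooth target $\beta$ via Laguerre partitions adapted to $\mu_N$. Weak precompactness of $\{\beta_N\}$ yields a limit point, and strict convexity of $F$ (since $H_\nu$ is strictly convex and $W^2(\mu,\cdot)$ is convex in the linear structure of measures) forces uniqueness of the continuum minimizer; hence the full sequence $\beta_N$ converges to the continuum Nash bargaining solution characterized by (\ref{ELF}).

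The main technical obstacle is the Laplace expansion in the second step: one needs uniform-in-$N$ control of the error in replacing $\int_{A_i^N} e^{-|p_i-y|^2/2}\,d\nu$ by its leading-order approximation $\nu(y_i^*) e^{-|p_i-y_i^*|^2/2}|A_i^N|$, so that the entropy approximation $\log(1/|A_i^N|) \approx -\log\beta_N(y_i^*)$ is valid at the correct scale. This requires the Laguerre cells to be reasonably balanced in size, which in turn relies on the regularity assumptions on $\mu$ and $\nu$ and on cell-diameter bounds near $\partial Y$. A related difficulty is constructing recovery sequences whose Laguerre structure is actually realized by legitimate $N$-player bargaining allocations rather than being merely formal.
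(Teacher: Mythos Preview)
Your overall strategy closely parallels the paper's: reduce to semidiscrete optimal transport via the first-order conditions, identify the Nash-product functional with an entropy-plus-Wasserstein functional on the pushforward measures $\beta_N$, and pass to the limit. The paper does not phrase this as $\Gamma$-convergence but uses a direct comparison: it builds a recovery sequence $\hat{\varphi}_N$ from the continuum optimizer $\hat{\beta}$ (your recovery step), uses $\tilde{F}_N(\varphi_N) \geq \tilde{F}_N(\hat{\varphi}_N)$ by optimality of $\varphi_N$, and shows both sides converge to the corresponding $\hat{F}$ values. One simplification you miss: the entropy identity is \emph{exact}, not a Laplace approximation. Because $\beta_N$ is defined to have constant density $\frac{1}{N\nu(E_i)}$ on each Laguerre cell $E_i$, one has $-H_\nu(\beta_N) = \frac{1}{N}\sum_i \ln\!\big(\nu(E_i)/\mu_N(p_i)\big)$ on the nose; only the utility term $\frac{1}{N}\sum_i \ln\!\big(\frac{1}{\nu(E_i)}\int_{E_i} s(p_i,y)\,d\nu\big)$ requires cell-diameter control to be compared with $-W^2(\mu_N,\beta_N)$, and that is a simple oscillation estimate once the diameters shrink, not a stationary-phase expansion.

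The genuine gap is in how you obtain that cell-diameter control for the \emph{maximizers} $\varphi_N$ (as opposed to the recovery sequence, where it follows from the regularity of $\hat{\beta}$). You attribute it to ``regularity assumptions on $\mu$ and $\nu$,'' but regularity of the data alone says nothing about the cells produced by an a priori unknown maximizer. The paper's mechanism is specific: the first-order inequality you already wrote down yields a uniform two-sided bound $\nu(E_i)/\nu(E_j)\in[a_0,a_0^{-1}]$ with $a_0=(\min s/\max s)^2$, so the left marginals $\alpha_N$ are uniformly comparable to $\mu_N$. Passing to the limit gives $\alpha\ll\mu$ with density bounded above and below, and then Caffarelli's strict-convexity theorem for the Monge--Amp\`ere potential pairing $\alpha$ and $\nu$ rules out any linear segment on the limiting $\varphi$, forcing the Laguerre diameters to vanish. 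Without this step the oscillation estimate cannot be closed, and your $\Gamma$-$\liminf$ inequality for the actual Nash-product functional (not the proxy $F_N$) does not follow.
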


We also have a regularity result:

\begin{theorem}
Suppose that $X,Y$ $\subset%
\mathbb{R}
^{n}$ are smooth, bounded, convex domains. \ Suppose that $\mu$ and $\nu$ and
smooth measure densities on $X$ and $Y$ respectively, that are bounded and
bounded away from zero. \ \ \ Then, the minimizers of the functional
(\ref{ELF}) are smooth. \ \ \ 
\end{theorem}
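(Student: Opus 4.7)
The plan is to follow the two-stage strategy sketched in the introduction: first establish a priori two-sided bounds and H\"older regularity for $\det D^{2}\varphi$ via the linearized Monge-Amp\`ere equation that it satisfies, and then bootstrap to $C^{\infty}$ via Caffarelli's Schauder theory for Monge-Amp\`ere combined with classical Schauder theory for the linear fourth-order Euler-Lagrange equation. Throughout, the hypotheses that $X,Y$ are smooth bounded convex and that $\mu,\nu$ are smooth densities pinched between positive constants are used repeatedly.

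First I would derive the Euler-Lagrange equation of (\ref{ELF}). Using $\delta\ln\det D^{2}\varphi=\varphi^{ij}\partial_{ij}\psi$, where $(\varphi^{ij})$ is the inverse Hessian, integrating by parts against $d\mu$, and invoking the Piola identity $\partial_{i}(\det D^{2}\varphi\cdot\varphi^{ij})=0$, the critical point condition rearranges into a linear second-order equation for
$$u\,:=\,\ln\det D^{2}\varphi-\ln\nu(\nabla\varphi),$$
of the schematic form
$$\Phi^{ij}(\varphi)\,\partial_{ij}u \,=\, F(x,\nabla\varphi),$$
with $\Phi^{ij}=\det(D^{2}\varphi)\,\varphi^{ij}$ the cofactor matrix of $D^{2}\varphi$ and $F$ depending only on zero- and first-order data. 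This is the Abreu-type structure: fourth order in $\varphi$ but second order and linear in $u$.

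The technical heart of the proof is then a two-sided $L^{\infty}$ bound on $\det D^{2}\varphi$, followed by H\"older continuity. The $\ln\det$ penalty in $G$ acts as a barrier against degeneration to zero, while the $W^{2}$ penalty together with the boundedness and positivity of $\nu$ controls blow-up. Crucially, once $\det D^{2}\varphi$ is pinched between positive constants, the Caffarelli-Guti\'errez Harnack inequality for the linearized Monge-Amp\`ere operator applies; it requires only two-sided bounds on $\det D^{2}\varphi$, \emph{not} on the eigenvalues of $D^{2}\varphi$ themselves, which is exactly what is available at this stage. This gives $u\in C^{0,\alpha}$, hence $\det D^{2}\varphi\in C^{0,\alpha}$, on the interior; boundary H\"older estimates, using convexity of $X$ and $Y$, can be obtained along parallel lines. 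This is the advertised content of Section~4.

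With $\det D^{2}\varphi\in C^{0,\alpha}(\overline{X})$ bounded away from zero and infinity, and with $Y=\nabla\varphi(X)$ smooth and convex, Caffarelli's interior and boundary Schauder theory for the Monge-Amp\`ere equation yields $\varphi\in C^{2,\alpha}(\overline{X})$. In particular, $\Phi^{ij}(\varphi)$ is now \emph{uniformly} elliptic in the classical sense, and the Euler-Lagrange equation, rewritten as a linear fourth-order equation for $\varphi$ with $C^{0,\alpha}$ coefficients, upgrades $\varphi$ to $C^{4,\alpha}$ by classical Schauder. Iterating this bootstrap (each application of Schauder raises regularity by two derivatives) gives $\varphi\in C^{\infty}(\overline{X})$. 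The main obstacle I expect is the two-sided $L^{\infty}$ bound on $\det D^{2}\varphi$ \emph{up to the boundary}: this requires combining the Monge-Amp\`ere structure, the transport cost, and the geometry of $\partial X$ and $\partial Y$ in a way compatible with the boundary behavior of the Kantorovich potential, and this delicate interplay, rather than the linear Schauder bootstrap, is where the real work lies.
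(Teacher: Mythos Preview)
Your overall architecture matches the paper's Section~5: derive the Abreu-type Euler--Lagrange equation, establish two-sided bounds and H\"older continuity for $\det D^{2}\varphi$, apply Caffarelli's Monge--Amp\`ere Schauder theory to get $\varphi\in C^{2,\alpha}$, then bootstrap with classical Schauder. The route to the crucial H\"older estimate, however, is genuinely different. The paper never invokes Caffarelli--Guti\'errez; instead, Lemmas~\ref{linfbound} and~\ref{holder} obtain both the $L^{\infty}$ bound and a $C^{0,1/2}$ estimate directly on the density $d\hat\beta/d\nu$ by elementary competitor arguments on the functional $\beta\mapsto H_{\nu}(\beta)+W^{2}(\mu,\beta)$: one moves small amounts of mass between level sets of the density and compares $F$-values, using only strict convexity of $u\mapsto u\ln u$ and the Lipschitz bound on the cost (Claim~\ref{costclaim}). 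This is more elementary---no linearized Monge--Amp\`ere machinery is needed---and, as the paper remarks around (\ref{ELF2}), works for general Lipschitz costs $c$ and general convex internal energies, whereas your PDE route is tied to the quadratic cost and the specific Abreu structure. Your approach, on the other hand, is the standard one in the Abreu-equation literature and plugs the problem into a well-developed toolbox.

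One caveat on your formulation: in the non-divergence form $\Phi^{ij}\partial_{ij}u=F$ you write down, the right-hand side (compare (\ref{qlin})) \emph{does} contain $D^{2}\varphi$ through the divergence, contrary to your claim that $F$ depends only on zero- and first-order data. What is true is that after a single integration by parts the equation sits in divergence form $\partial_{i}\bigl(C^{ij}\partial_{j}v+g^{i}\bigr)=0$ with $v=\mu/\det D^{2}\varphi$ and $g^{i}=g^{i}(x,\nabla\varphi)$ bounded; to run your argument you therefore need the divergence-form version of the linearized Monge--Amp\`ere H\"older theory rather than the original Caffarelli--Guti\'errez non-divergence statement. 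Finally, note that the paper's bootstrap is purely interior (Caffarelli's interior estimates plus \cite[Theorem~6.19]{GT}); your concerns about boundary $L^{\infty}$ bounds, while reasonable for a global statement, go beyond what is actually established here.
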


Regularity for the Jordan-Kinderlehrer-Otto Scheme on the torus was shown in
\cite{Lee}. \ The proof of regularity in \cite{Lee} for functionals of the
form (\ref{ELF}) is for small time scales (which is sufficient to prove
regularity of the JKO scheme) and assumes some differentiability. \ An
$L^{\infty}$ estimate on the densities obtained via the JKO scheme for a
general form of a parabolic-elliptic Keller-Segel type system, was shown
recently in \cite{CS}.  We are able to obtain H\"{o}lder estimates directly on
the minimizing measures without any assumptions of differentiability or
smallness. \ Abreu's equation involves minimizing a functional of the form
(\ref{ELF}), where the gradient term is replaced by a integral involving the
potential $\varphi.$ \ \ Details on regularity for Abreu's equation can be
found in \cite{Le}.

\section{Setup}

Our goal in this section is to describe the bargaining problem in terms of
measures. \ \ \ \ \ 

Suppose that $Y$ is any topological space, and $\nu$ is a Borel probability
measure on $Y.$ \ We call the pair $\left(  Y,\nu\right)  $ the surplus.
\ \ When there are $N$ players vying for portions of the surplus, we can think
of the space of possible allocations of the surplus as $N$-tuples of
non-negative Borel measures $\left(  \nu_{1},...,\nu_{N}\right)  $, such that%
\[
\sum_{i=1}^{N}\nu_{i}\leq\nu.
\]
The function
\[
s:X\times Y\rightarrow%
\mathbb{R}
^{+}%
\]
is called a utility density function. \ The value $s(x,y)$ gives the utility
to player at $x\in X$ for a unit of $y\in Y.$ \ As we will be integrating the
density, it makes most sense to assume the utility is linear in terms of a
fixed resource $y$, in the sense that the marginal utility to a player at $x$
of a unit $y$ is determined only by the utility density function, and not by a
function of the same or other player's allocations. \ 

\begin{example}
\bigskip Suppose two emporers have set up capital cities at points $x_{1}$ and
$x_{2}$ in a region $\Omega\subset%
\mathbb{R}
^{2}$, and must negotiate how to split the region. \ Each emporer decides that
the value of any unit $y$ of land to their kingdom is given by $e^{-|x_{i}%
-y|^{2}/2}.$ \ The possible allocations of the region are measures $\nu
_{1},\nu_{2}$ such that
\[
\nu_{1}+\nu_{2}=dy_{|\Omega}.
\]
The utility function to each is
\[
U_{i}(\nu_{i})=\int_{Y}e^{-|x_{i}-y|^{2}/2}d\nu_{i}(y).
\]

\end{example}

Notice that in the $N$-player case, when the utility density is positive,
pareto-optimal solutions are a set of $N$ measures with $\sum_{i=1}^{N}\nu
_{i}=\nu.$ \ If $X=\left\{  p_{1},...,p_{n}\right\}  $ and $\pi\in P(X\times
Y)$ is a probability measure such that $\pi_{Y}=\nu$ ( here $\pi_{Y}=\left(
\text{Proj}_{Y}\right)  _{\#}$ $\pi,$ i.e. the right marginal) we may consider
the measures $\nu_{i}=\pi_{|\left\{  p_{i}\right\}  \times Y}.$ \ In this
case, we can define the utility to each player as follows%

\begin{equation}
U_{i}\left(  \pi\right)  =\int_{\left\{  p_{i}\right\}  \times Y}%
s(p_{i},y)d\pi.\label{U2}%
\end{equation}
The Nash bargaining solution is determined by maximizing the Nash product,
namely
\[
\mathcal{N}=\prod_{i=1}^{N}U_{i}.
\]
(Note that we are assuming the disagreement point is the $0$-allocation, that
is, all players gets nothing when they do not agree.)\ Equivalently, one can
maximize the logarithm
\[
\ln\mathcal{N}=\sum_{i=1}^{N}\ln U_{i}.
\]

With this in mind, for the case of $N$ players, we define the Nash bargaining
problem as follows:

\begin{problem}
Find a measure $\pi$ that maximizes the functional
\begin{equation}
F(\pi)=\ln N+\frac{1}{N}\sum_{i=1}^{N}\ln\int_{\left\{  p_{i}\right\}  \times
Y}s(p_{i},y)d\pi\label{NP1}%
\end{equation}
over the space of measures $\pi\in P(X\times Y)$, under the constraint%
\[
\pi_{Y}=\nu.
\]

\end{problem}

The term $\ln N$ and factor $1/N$ do not affect the arg max, however, they are
present for normalization reasons which will become clear when we attempt to
build a continuous solution. \ 

\bigskip

\subsection{\bigskip Reformulating the functional}

\bigskip Working with (\ref{NP1}):%
\begin{align*}
F(\pi)  &  =\ln N+\frac{1}{N}\sum_{i=1}^{N}\left[  \ln\left(  \pi\left(
\left\{  p_{i}\right\}  \times Y\right)  \right)  +\ln\left(  \frac{1}%
{\pi\left(  \left\{  p_{i}\right\}  \times Y\right)  }\int_{\left\{
p_{i}\right\}  \times Y}s(p_{i},y)d\pi\right)  \right] \\
&  =\ln N+\frac{1}{N}\sum_{i=1}^{N}\left[  \ln\left(  \frac{\pi\left(
\left\{  p_{i}\right\}  \times Y\right)  }{1/N}\right)  +\ln(1/N)+\ln\left(
\frac{1}{\pi\left(  \left\{  p_{i}\right\}  \times Y\right)  }\int_{\left\{
p_{i}\right\}  \times Y}s(p_{i},y)d\pi\right)  \right] \\
&  =\frac{1}{N}\sum_{i=1}^{N}\left[  \ln\left(  \frac{\pi\left(  \left\{
p_{i}\right\}  \times Y\right)  }{1/N}\right)  +\ln\left(  \frac{1}{\pi\left(
\left\{  p_{i}\right\}  \times Y\right)  }\int_{\left\{  p_{i}\right\}  \times
Y}s(p_{i},y)d\pi\right)  \right]  .
\end{align*}
At this point, we define a measure on $X=\left\{  p_{1},...,p_{N}\right\}  $
as
\[
\alpha=\pi_{X}%
\]
or equivalently,
\[
\alpha(p_{i})=\pi\left(  \left\{  p_{i}\right\}  \times Y\right)  .
\]
Defining
\begin{equation}
\mu_{N}=\frac{1}{N}\sum_{i=1}^{N}\delta_{p_{i}} \label{defMun}%
\end{equation}
we get
\begin{align}
F(\pi)  &  =\frac{1}{N}\sum_{i=1}^{N}\left[  \ln\left(  \frac{\alpha(p_{i}%
)}{\mu_{N}\left(  p_{i}\right)  }\right)  +\ln\left(  \frac{1}{\alpha(p_{i}%
)}\int_{\left\{  p_{i}\right\}  \times Y}s(p_{i},y)d\pi\right)  \right]
\label{F2}\\
&  =\int_{X}\ln\left(  \frac{d\alpha}{d\mu_{N}}\right)  \frac{d\mu_{N}%
}{d\alpha}d\alpha+\int_{X}A\mu_{N}\nonumber
\end{align}
where
\[
A(p_{i})=\ln\left(  \frac{1}{\alpha(p_{i})}\int_{\left\{  p_{i}\right\}
\times Y}s(p_{i},y)d\pi\right)  .
\]
Note that if $\alpha(p_{i})$ $=0$, then $\int\ln\left(  \frac{d\alpha}{d\mu
}\right)  d\mu=-\infty,$ in which case $A(p_{i})$ may be undefined, but we
agree that $F(\pi)=-\infty.$

\bigskip

\subsection{Solutions are Optimal Transport Plans}

(Cf. \cite[section 3]{Schumacher}.) Suppose that $\pi$ is a maximizer for $F,$
and $s$ is a positive, continuous function. \ Define%
\[
c(x,y)=-\ln s(x,y)\text{.}%
\]
Recall that a measure $\pi\in P(X\times Y)$, is a solution to the optimal
transportation problem pairing the measures $\mu$ and $\nu,$ when $\pi$
minimizes%
\[
\int_{X\times Y}c(x,y)d\pi
\]
under the constraint
\begin{align*}
\pi_{X} &  =\mu\\
\pi_{Y} &  =\nu.
\end{align*}
By Kantorovich duality, we have that, \cite[Theorem 5.10 (iii)]{Villani}
\[
\min_{\pi\in P(X\times Y)}\int c(x,y)d\pi=\max_{\varphi\in L^{1}(X,\alpha
_{N})}\int_{Y}\varphi^{c}(y)d\nu-\int_{X}\varphi(x)d\mu.
\]
where $\varphi^{c}$ is the "cost-transform" of $\varphi.$ \ \ This is a
general fact. \ For many cost functions of interest (for example
$c(x,y)=\left\vert x-y\right\vert ^{2}/2)$ the functions $\varphi^{c}$ and
$\varphi$ are determined uniquely up to a constant. \ 

\begin{proposition}
\label{three}The measure $\pi$ is an optimal plan, pairing $\alpha$ and $\nu.$
\end{proposition}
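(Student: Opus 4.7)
The plan is to exploit the fact that the only constraint in the maximization problem is $\pi_Y = \nu$, so mass may be freely reallocated among the columns $\{p_i\} \times Y$ at any common point $y \in Y$; a first-order analysis will then produce a Kantorovich dual pair certifying optimality for the cost $c(x,y) = -\ln s(x,y)$. Let $\pi_0$ be a maximizer of $F$, set $\alpha = \pi_{0,X}$, and write $U_i = \int_{\{p_i\}\times Y} s(p_i,y)\,d\pi_0$ for the utilities. First I would perturb $\pi_0$ by transferring an infinitesimal $\epsilon$ mass at a fixed $y$ from column $i$ into column $j$. This perturbation preserves $\pi_Y$, changes $U_i$ by $-\epsilon s(p_i,y)$ and $U_j$ by $+\epsilon s(p_j,y)$, and leaves every other utility unchanged. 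Since $F$ depends on $\pi$ only through $\sum_k \ln U_k$ up to an additive constant, the one-sided optimality inequality $F(\pi_\epsilon) \leq F(\pi_0)$ for $\epsilon > 0$ forces
\[
\frac{s(p_j, y)}{U_j} \leq \frac{s(p_i, y)}{U_i} \qquad \text{for all } j, \text{ whenever } (p_i, y) \in \mathrm{supp}(\pi_0).
\]

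The second step converts this pointwise condition into a Kantorovich duality certificate. Taking logarithms and setting $\varphi(p_i) := -\ln U_i$, the inequality reads $\varphi(p_j) - c(p_j, y) \leq \varphi(p_i) - c(p_i, y)$ on $\mathrm{supp}(\pi_0)$; equivalently, for every such $(p_i, y)$ the map $x \mapsto c(x,y) - \varphi(x)$ attains its minimum over the finite set $X$ at $x = p_i$. Defining the $c$-transform $\tilde\psi(y) := \inf_{x \in X}(c(x,y) - \varphi(x))$, I obtain $\varphi(x) + \tilde\psi(y) \leq c(x,y)$ on all of $X \times Y$, with equality on $\mathrm{supp}(\pi_0)$. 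This is exactly the duality certificate for $\pi_0$ to be optimal between its own marginals, so by the Kantorovich duality theorem cited from Villani $\pi_0$ is an optimal transport plan pairing $\alpha$ and $\nu$ with cost $c$.

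The main obstacle is making the single-point perturbation rigorous, since one cannot literally move mass at an isolated $y$. The standard remedy is to replace the point $y$ by a shrinking neighborhood $B_r(y)$ and test $F$ against variations that transfer an infinitesimal amount of mass from $\{p_i\} \times B_r(y)$ to $\{p_j\} \times B_r(y)$; dividing by the total mass transferred and sending $r \to 0$, continuity and positivity of $s$ yield the pointwise inequality at $\pi_0$-almost every $(p_i, y) \in \mathrm{supp}(\pi_0)$. Once this holds on a full-measure subset of the support, the passage to Kantorovich duality is essentially bookkeeping, requiring only the measurability of $(\varphi, \tilde\psi)$, which follows automatically from the positivity of each $U_i$ and the continuity of $s$.
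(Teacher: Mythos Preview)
Your proof is correct and shares the same core perturbation argument with the paper: both move an infinitesimal amount of mass from $\{p_i\}\times B_\varepsilon(y)$ to $\{p_j\}\times B_\varepsilon(y)$, differentiate $F$ at $t=0$, and pass $\varepsilon\to 0$ using continuity of $s$ to obtain the key inequality $s(p_i,y)/U_i \ge s(p_j,y)/U_j$ on $\mathrm{supp}(\pi)$ (this is exactly the paper's inequality (\ref{fla})).

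The only divergence is in how this inequality is converted into optimality. The paper takes logarithms to get $c(x_i,y_i)-c(x_{i+1},y_i)\le \ln\kappa_i-\ln\kappa_{i+1}$ with $\kappa_i=U_i$, then sums these telescopically around a cycle to conclude that $\pi$ is $c$-cyclically monotone, invoking Villani's characterization of optimality. You instead recognize $\varphi(p_i):=-\ln U_i$ as an explicit Kantorovich potential, form its $c$-transform $\tilde\psi$, and verify the duality equality on the support directly. Both routes are standard and equivalent; yours has the slight advantage of naming the potential concretely (which the paper only implicitly carries as the constants $\ln\kappa_i$), while the paper's cyclic-monotonicity argument avoids any appeal to the existence of dual optimizers and stays closer to the Villani reference it cites.
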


\begin{proof}
We proceed as in \cite[proof of Theorem 5.10, step 3 on page 65]{Villani}.
\ We will show that $\pi$ is $c$-cyclically monotone. \ 

Suppose that
\[
\left\{  (x_{1},y_{1}),...,(x_{n},y_{n})\right\}  \subset\text{Supp}(\pi).
\]
\ We would like to show that
\[
\sum_{i=1}^{N}c\left(  x_{i},y_{i}\right)  \leq\sum_{i=1}^{N}c\left(
x_{i},y_{i-1}\right)
\]
(using convention that $y_{0}=y_{N}$). \ Moving a bit of mass from $\left(
x_{1},y_{1}\right)  $ to $\left(  x_{2},y_{1}\right)  $ will preserve the
right marginal condition, but cannot increase the functional, by maximality.
Because $\left(  x_{1},y_{1}\right)  \in$ Supp$(\pi),$ there is some mass
available to move. \ \ \ Choose an arbitrarily small set $B_{\varepsilon}$
near $\left(  x_{1},y_{1}\right)  $ in $y$ and consider the competing family
of measures for $t\in\lbrack0,1]:$
\begin{align*}
\pi(t) &  =\left\{  \tilde{\nu}_{1}(t),\tilde{\nu}_{2}(t),\nu_{3},...,\nu
_{N}\right\}  \\
\tilde{\nu}_{1} &  =\nu_{1}-t\nu_{1}|_{B_{\varepsilon}}\\
\tilde{\nu}_{2} &  =\nu_{2}+t\nu_{1}|_{B_{\varepsilon}}.
\end{align*}
Clearly $\pi(t)$ is a path of admissible measures, so we can take a one-sided
derivative of (\ref{NP1}) with respect to $t:$%

\[
0\geq\frac{dF}{dt}|_{t=0}=\frac{1}{N}\left[  \frac{-\int_{\left\{
p_{1}\right\}  \times B_{\varepsilon}}s(x_{1},y)d\nu_{1}}{\int_{\left\{
p_{1}\right\}  \times Y}s(x_{1},y)d\nu_{1}}+\frac{\int_{\left\{
p_{1}\right\}  \times B_{\varepsilon}}s(x_{2},y)d\nu_{1}}{\int_{\left\{
p_{2}\right\}  \times Y}s(x_{2},y)d\nu_{2}}\right]  .
\]
That is%
\[
0\geq\left[  \frac{-\frac{1}{\nu_{1}\left(  B_{\varepsilon}\right)  }%
\int_{\left\{  p_{1}\right\}  \times B_{\varepsilon}}s(x_{1},y_{1})d\nu_{1}%
}{\int_{\left\{  p_{1}\right\}  \times Y}s(x_{1},y)d\nu_{1}}+\frac{\frac
{1}{\nu_{1}\left(  B_{\varepsilon}\right)  }\int_{\left\{  p_{1}\right\}
\times B_{\varepsilon}}s(x_{2},y_{1})d\nu_{1}}{\int_{\left\{  p_{2}\right\}
\times Y}s(x_{2},y)d\nu_{2}}\right]  .
\]
Choosing $B_{\varepsilon}$ small, since $s$ is continuous, the average values
in the numerator must converge to point values, and in the limit we see \
\begin{equation}
\frac{s(x_{1},y_{1})}{\int_{\left\{  p_{1}\right\}  \times Y}s(x_{1}%
,y)d\nu_{1}}\geq\frac{s(x_{2},y_{1})}{\int_{\left\{  p_{2}\right\}  \times
Y}s(x_{2},y)d\nu_{2}}. \label{fla}%
\end{equation}
Then we have
\[
\ln s(x_{1},y_{1})-\ln s(x_{2},y_{1})\geq\ln\int_{\left\{  p_{1}\right\}
\times Y}s(x_{1},y)d\nu_{1}-\ln\int_{\left\{  p_{2}\right\}  \times Y}%
s(x_{2},y)d\nu_{2}%
\]
or
\[
c(x_{1},y_{1})-c(x_{2},y_{1})\leq\ln\kappa_{1}-\ln\kappa_{2}.
\]
By relabeling,%
\begin{align*}
c(x_{2},y_{2})-c(x_{3},y_{2})  &  \leq\ln\kappa_{2}-\ln\kappa_{3},\\
&  ...\\
c(x_{N},y_{N})-c(x_{1},y_{N})  &  \leq\ln\kappa_{N}-\ln\kappa_{1}.
\end{align*}
Summing, we have
\[
\sum_{i=1}^{N}c(x_{i},y_{i})-\sum_{i=1}^{N}c(x_{i},y_{i-1})\leq0.
\]

\end{proof}

\begin{corollary}
\bigskip Suppose that
\[
s(x,y)=e^{-|x-y|^{2}/2}%
\]
and $X,Y\subset%
\mathbb{R}
^{n}.$ \ Then the solution maximizing (\ref{NP1}) can be described as%
\[
\nu_{i}=\nu_{|E_{i}}%
\]
where each $E_{i}$ is a Laguerre cell, defined as the subgradient of a convex
function $\varphi$ at point $x_{i}.$
\end{corollary}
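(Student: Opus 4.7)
The plan is to derive the conclusion directly from Proposition \ref{three} combined with the structure theorem for semi-discrete optimal transport under quadratic cost. Proposition \ref{three} identifies $\pi$ as an optimal plan coupling its left marginal $\alpha$ with $\nu$ for the cost $c(x,y) = -\ln s(x,y) = |x-y|^2/2$, and since $\alpha$ is supported on the finite set $\{p_1,\ldots,p_N\}$, the transport from $\nu$ onto $\alpha$ has the standard semi-discrete structure.

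My next step is to invoke cyclical monotonicity of $\mathrm{supp}(\pi)$. After expanding $|x-y|^2/2 = |x|^2/2 - x\cdot y + |y|^2/2$, the $c$-cyclical monotonicity established in the proof of Proposition \ref{three} reduces to classical Rockafellar cyclical monotonicity of the $y \mapsto x$ correspondence. Rockafellar's theorem then furnishes a convex function $\varphi$ defined on the convex hull of $\{p_1,\ldots,p_N\}$ (or a suitable extension) such that $\mathrm{supp}(\pi) \subset \{(x,y) : y \in \partial\varphi(x)\}$, and therefore $E_i := \{y : (p_i,y) \in \mathrm{supp}(\pi)\} \subset \partial\varphi(p_i)$. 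Concretely, $\varphi$ may be taken as the Legendre conjugate of $\varphi^*(y) = \max_i(y\cdot p_i - a_i)$, where the constants $a_i$ arise from the dual Kantorovich potential, so that $\partial\varphi(p_i)$ is exactly the Laguerre cell
\[
\{y \in Y : y\cdot p_i - a_i \ge y\cdot p_j - a_j \text{ for all } j\}.
\]

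The final step is to identify $\nu_i$ with $\nu|_{E_i}$. The Laguerre cell boundaries lie on finitely many hyperplanes and so have measure zero for the absolutely continuous $\nu$; the cells therefore partition $Y$ up to a $\nu$-null set. Disintegrating $\pi$ along $\nu$, its fibers collapse to the single atom $p_i$ for $\nu$-a.e.\ $y \in E_i$, which recovers $\nu_i = \pi|_{\{p_i\}\times Y} = \nu|_{E_i}$.

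I do not anticipate any serious obstacle, as the corollary is essentially the semi-discrete Brenier theorem packaged through Proposition \ref{three}. The one mild subtlety is confirming that Laguerre cell boundaries are $\nu$-null; this is immediate from the absolute continuity implicitly assumed for $\nu$ in the intended setting, but would warrant more care in contexts where $\nu$ could concentrate on hyperplanes.
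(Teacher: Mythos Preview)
Your argument is correct and is precisely the standard semi-discrete Brenier/Kantorovich argument one expects here. The paper does not prove this corollary directly at all: it simply cites \cite[Chapter 5]{Villani} and \cite[Section 2]{BCMO} and then recaps the Laguerre-cell machinery from the latter; your sketch is exactly the content of those references, so your approach and the paper's (deferred) approach coincide.
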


We won't prove this directly, but refer the reader to \cite[Chapter
5]{Villani} and \cite[Section 2]{BCMO}. \ We include a condensed recap of
\cite[Section 2]{BCMO} which will help our discussion moving forward: \ 

Suppose $P\subset%
\mathbb{R}
^{n}$ is a finite set of points, and $Y\subset%
\mathbb{R}
^{n}.$ \ \ Given a function $\varphi$ defined on $P,$ define
\[
\varphi_{\mathcal{K}_{Y}}=\max\left\{  \psi\in\mathcal{K}_{Y};\psi_{|_{P}}%
\leq\varphi_{|_{P}}\right\}
\]
where
\[
\mathcal{K}_{Y}=\left\{  \psi^{\ast};\psi:Y\rightarrow\bar{%
\mathbb{R}%
}\right\}
\]
is the set of functions which are Legendre-Fenchel transforms of functions on
$Y,$ which are necessarily convex. \ \ Define
\[
\mathcal{K}_{Y}(P)=\left\{  \varphi:P\rightarrow%
\mathbb{R}
;\varphi=\varphi_{\mathcal{K}_{Y}}|_{P}\right\}
\]
and
\[
Lag_{P}^{\varphi}(p):=\left\{  y\in%
\mathbb{R}
^{n};\forall q\in P,\varphi(q)\geq\varphi(p)+\langle q-p|y\rangle\right\}  .
\]
For normalization, we define
\[
\mathcal{K}_{Y}(P)_{0}=\left\{  \varphi\in\mathcal{K}_{Y}(P);\min
\varphi=0\right\}  .
\]

\begin{lemma}
\cite[Lemma 2.2]{BCMO} Let $P$ be a finite point set. A\ function $\varphi$ on
$P$ belongs \ to $\mathcal{K}_{Y}(P)$ if and only if for every $p$ in $P,$ the
intersection $Lag_{P}^{\varphi}(p)\cap Y$ is non-empty. \ \ Moreover, if this
is the case, then
\[
\partial\varphi_{\mathcal{K}}(p)=Lag_{P}^{\varphi}(p)\cap Y.
\]

\end{lemma}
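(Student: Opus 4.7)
The plan is to interpret $\mathcal{K}_Y$ concretely as the set of convex functions on $\mathbb{R}^n$ that arise as Legendre--Fenchel transforms of functions supported (i.e.\ allowed to be finite) only on $Y$; such functions are exactly the convex functions whose subdifferentials lie in $Y$ (using that $Y$ is closed and convex). Both halves of the equivalence and the identification $\partial\varphi_{\mathcal{K}}(p)=Lag_P^{\varphi}(p)\cap Y$ then reduce to a few short convex-analysis arguments.

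For the forward direction, suppose $\varphi\in\mathcal{K}_Y(P)$, so $\psi:=\varphi_{\mathcal{K}_Y}$ is a member of $\mathcal{K}_Y$ with $\psi|_P=\varphi$. Since $\psi=\chi^{*}$ for some $\chi:Y\to\bar{\mathbb{R}}$, for any $p\in P$ at which $\psi$ is finite the supremum defining $\psi(p)=\sup_{y\in Y}(\langle p,y\rangle-\chi(y))$ is attained at some $y^{*}\in Y$ (appealing to lower semicontinuity and compactness of $Y$), and this $y^{*}$ lies in $\partial\psi(p)\cap Y$. Restricting the subgradient inequality $\psi(q)\geq\psi(p)+\langle q-p,y^{*}\rangle$ to $q\in P$ gives $y^{*}\in Lag_P^{\varphi}(p)\cap Y$.

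For the reverse direction, starting from the assumption that $Lag_P^{\varphi}(p)\cap Y\neq\emptyset$ for every $p$, I would build the natural candidate $\chi(y):=\max_{p\in P}(\langle p,y\rangle-\varphi(p))$ on $Y$ and set $\psi(x):=\sup_{y\in Y}(\langle x,y\rangle-\chi(y))$, so that $\psi\in\mathcal{K}_Y$ by construction. The inequality $\psi(p)\leq\varphi(p)$ is automatic from the definition of $\chi$; the matching lower bound $\psi(p)\geq\varphi(p)$ is where the hypothesis is used: any $y^{*}\in Lag_P^{\varphi}(p)\cap Y$ satisfies $\chi(y^{*})=\langle p,y^{*}\rangle-\varphi(p)$, giving $\psi(p)\geq\varphi(p)$. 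Thus $\psi|_P=\varphi$, and since any $\psi'\in\mathcal{K}_Y$ with $\psi'|_P\leq\varphi|_P$ satisfies $\psi'(p)\leq\varphi(p)$ pointwise on $P$, the maximum $\varphi_{\mathcal{K}_Y}$ equals $\varphi$ on $P$, placing $\varphi$ in $\mathcal{K}_Y(P)$.

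For the moreover part, the inclusion $\partial\varphi_{\mathcal{K}}(p)\subseteq Lag_P^{\varphi}(p)\cap Y$ comes from two observations: (i) restricting the global subgradient inequality to $q\in P$ gives $Lag_P^{\varphi}$-membership, and (ii) any subgradient of a Legendre--Fenchel transform of a function supported on $Y$ must lie in $Y$. For the opposite inclusion, given $y\in Lag_P^{\varphi}(p)\cap Y$, I consider the affine function $\ell(x)=\varphi(p)+\langle x-p,y\rangle$. Because $y\in Y$, $\ell$ belongs to $\mathcal{K}_Y$ (it is the transform of the function equal to $-\varphi(p)+\langle p,y\rangle$ at $y$ and $+\infty$ elsewhere on $Y$); the Laguerre condition forces $\ell|_P\leq\varphi|_P$, so by maximality $\ell\leq\varphi_{\mathcal{K}_Y}$ globally, which is exactly the subgradient inequality placing $y\in\partial\varphi_{\mathcal{K}}(p)$. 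The only genuine subtlety, and therefore the main obstacle, is to cleanly justify the two-way correspondence between membership in $\mathcal{K}_Y$ and slopes of supporting hyperplanes lying in $Y$; once convexity/closedness of $Y$ is invoked this is standard, but it is the pivot on which the whole argument turns.
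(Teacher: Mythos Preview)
The paper does not supply its own proof of this lemma; it simply quotes \cite[Lemma 2.2]{BCMO} and moves on. So there is nothing in the paper to compare your argument against.

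That said, your argument is correct and is essentially the standard convex-analytic proof one finds in the BCMO setting. A couple of small remarks. In the forward direction you appeal to attainment of the supremum $\sup_{y\in Y}(\langle p,y\rangle-\chi(y))$; this is fine once you note that $\chi$ may be replaced by its lower semicontinuous envelope without changing $\chi^{*}$, and that $Y$ is compact in the paper's setting. An equivalent (and slightly cleaner) route is to observe that any $\psi\in\mathcal{K}_Y$ is a finite convex function on $\mathbb{R}^n$ (a supremum of affine functions with slopes in the bounded set $Y$), hence $\partial\psi(p)\neq\emptyset$, and that $\partial\psi(p)\subseteq\mathrm{dom}(\psi^{*})\subseteq\overline{\mathrm{conv}(Y)}=Y$ since $Y$ is closed and convex. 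In the reverse direction your wording ``since any $\psi'\in\mathcal{K}_Y$ with $\psi'|_P\leq\varphi|_P$ satisfies $\psi'(p)\leq\varphi(p)$'' is a tautology; what you actually use is that the pointwise supremum $\varphi_{\mathcal{K}_Y}$ still lies in $\mathcal{K}_Y$ (closure of $\mathcal{K}_Y$ under suprema) and still satisfies $\varphi_{\mathcal{K}_Y}|_P\leq\varphi$, while your constructed $\psi$ is a competitor with $\psi|_P=\varphi$, forcing equality on $P$. Your ``moreover'' argument via the affine function $\ell$ is exactly right and is the crisp way to show the reverse inclusion.
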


\subsection{Reformulating the problem again.}

Now that we have established that the solution must be an optimal transport
plan, we may formulate the problem over the space of optimal transport plans.
\ \ First, \ we need

\begin{lemma}
A\ maximizing solution to (\ref{NP1}) exists, and is unique. \ 
\end{lemma}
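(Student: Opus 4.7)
For existence I would apply the direct method: the admissible set $\Pi := \{\pi \in P(X\times Y) : \pi_Y = \nu\}$ is nonempty (take $\nu_i = \nu/N$) and weak-$*$ compact (it is the preimage of $\{\nu\}$ under the continuous marginal map on the compact space $P(X\times Y)$). Each $U_i(\pi) := \int_{\{p_i\}\times Y} s(p_i,y)\,d\pi$ is linear and weak-$*$ continuous, so $\ln U_i$ is upper semicontinuous on $\Pi$ (extending by $-\infty$ where $U_i$ vanishes). Since $F$ is finite at the uniform admissible measure above, a weak-$*$ cluster point of any maximizing sequence attains the supremum.

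For uniqueness, note that $F$ is concave because each $\ln U_i$ is the composition of $\ln$ with a linear functional. Suppose $\pi_1 \neq \pi_2$ are both maximizers and set $\pi_t := (1-t)\pi_1 + t\pi_2 \in \Pi$. Concavity forces each $\pi_t$ to be a maximizer, and strict concavity of $\ln$ applied to
\[
F(\pi_t) = \ln N + \frac{1}{N}\sum_{i=1}^{N}\ln\bigl((1-t)U_i(\pi_1) + t\,U_i(\pi_2)\bigr)
\]
then forces $U_i(\pi_1) = U_i(\pi_2)$ for every $i$. This alone does not pin down $\pi$, so I would next invoke Proposition \ref{three}: each maximizer, including $\pi_{1/2}$, is an optimal plan between its $X$-marginal and $\nu$ for the cost $c(x,y) = |x-y|^2/2$. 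Since $\nu$ is absolutely continuous, Brenier's theorem guarantees any such plan is of the form $(T,\mathrm{id})_\#\nu$ for a measurable $T \colon Y \to X$ (constant on the Laguerre cells of the Corollary above). But the midpoint
\[
\pi_{1/2} = \tfrac{1}{2}\bigl((T_1,\mathrm{id})_\#\nu + (T_2,\mathrm{id})_\#\nu\bigr)
\]
can be supported on a single graph only if $T_1 = T_2$ $\nu$-a.e., which yields $\pi_1 = \pi_2$.

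The hard part is uniqueness, not existence. The concavity of $F$ is non-strict, since $F$ depends on $\pi$ only through the linear quantities $U_i(\pi)$, so convexity analysis alone cannot separate two maximizers whose utilities agree. The argument genuinely needs the optimal-transport structure of Proposition \ref{three} together with the deterministic nature of Brenier plans against an absolutely continuous marginal; without absolute continuity of $\nu$ one would expect uniqueness to fail.
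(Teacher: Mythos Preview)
Your existence argument parallels the paper's but is more careful, checking upper semicontinuity explicitly rather than invoking concavity alone. For uniqueness you take a genuinely different, and more rigorous, route. The paper simply asserts that ``the sum of logarithms is strictly concave'' and concludes uniqueness directly; but, as you correctly point out, $F$ depends on $\pi$ only through the linear functionals $U_i$, so any two admissible plans with identical utility vectors give the same value of $F$, and such pairs certainly exist (the map $\pi\mapsto (U_1,\dots,U_N)$ is a linear map from an infinite-dimensional set to $\mathbb{R}^N$). At best the paper's argument shows that the maximizing \emph{utility vector} is unique. Your argument --- that concavity forces only $U_i(\pi_1)=U_i(\pi_2)$, and that one must then invoke Proposition~\ref{three} together with Brenier's uniqueness on the midpoint $\pi_{1/2}$ to conclude $\pi_1=\pi_2$ --- supplies precisely the missing step.

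Your appeal to absolute continuity of $\nu$ and the quadratic cost is not incidental: for constant utility density $s\equiv 1$, every admissible plan with uniform $X$-marginal maximizes $F$, so the lemma as literally stated (for arbitrary positive continuous $s$) is false. The optimal-transport structure you invoke is therefore genuinely needed, and your proof should be read as establishing the lemma under the standing hypotheses of the paper (quadratic cost, $\nu$ absolutely continuous) rather than in full generality.
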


\begin{proof}
The constraint $\pi_{Y}=\nu$ is linear on the set of nonnegative probability
measures, which form a compact convex set. For each $i$, the functional
\[
f_{i}=\int_{\left\{  p_{i}\right\}  \times Y}s(p_{i},y)d\pi
\]
is clearly linear in the measure $\pi.$ \ It follows that the sum of
logarithms is strictly concave. \ \ A\ concave function on a convex compact
set achieves its maximum value, which is unique by strict concavity. \ \ \ 
\end{proof}

It is clear that the maximizing measure must be supported on $P\times Y:$
\ Moving any mass from a point not in $P$ to a point on $P$ will increase the
value of $F.$ \ By Kantorovich duality, the optimal transport plans with
target $\nu$ can be parameterized by convex functions on the set $P,$ which is
just a set of $N$ values. \ The functional can be expressed as follows. \
\begin{equation}
\tilde{F}_{N}(\varphi)=\frac{1}{N}\sum_{i=1}^{N}\left[  \ln\left(  \frac
{\nu(E_{i})}{\mu_{N}\left(  p_{i}\right)  }\right)  +\ln\left(  \frac{1}%
{\nu(E_{i})}\int_{E_{i}}s(p_{i},y)d\nu\right)  \right]  \label{p3}%
\end{equation}
where
\[
E_{i}=Lag_{P}^{\varphi}(p_{i}).
\]

We now offer a second formulation of the problem.

\begin{problem}
(Nash bargaining problem, Version 2) Maximize \ (\ref{p3}) over the space of
functions $\mathcal{K}_{Y}(P)_{0}.$
\end{problem}

\bigskip

\subsection{Absolutely continuous pushforward measures}

Given a potential function $\varphi\in\mathcal{K}_{Y}(P)_{0}$, we have a
Laguerre decomposition of $Y.$ \ The subgradient map is set-valued, so one
cannot define the pushforward directly. \ However, following \cite{BCMO}, if
we have chosen a background measure $\nu,$ we can \textquotedblleft
average\textquotedblright\ over the Laguerre cell to define an absolutely
continuous pushforward measure. \ 

Given any decomposition of $Y$ into cells $E_{i}$, each with positive measure,
define the following probability measure on $Y:$ \ For measurable $Z\subset
Y,$
\begin{equation}
\beta_{\varphi}(Z)=\frac{1}{N}\sum_{i=1}^{N}\frac{\nu(E_{i}\cap Z)}{\nu
(E_{i})}.\label{acm}%
\end{equation}
In the measure $\pi,$ which is optimal between $\alpha$ and $\nu,$ each cell
$\left\{  p_{i}\right\}  \times E_{i}$ has measure $\nu(E_{i})$. \ By
multiplying each piece by
\[
\frac{1}{N}/\nu(E_{i})
\]
we get a new measure, $\pi^{\prime}$, which has the same support as $\pi,$ but
now has marginals%
\begin{align*}
\left(  \pi^{\prime}\right)  _{X} &  =\mu_{N}\\
\left(  \pi^{\prime}\right)  _{Y} &  =\beta_{\varphi}.
\end{align*}
This means that the set-valued mapping induced by the subgradient of $\varphi$
is an optimal transport mapping not only between $\alpha$ and $\nu\,,$ but
also between $\mu_{N}$ and $\beta_{\varphi}$.

\section{Limits of finite source solutions}

Our next goal is to take a limit of solutions, when the set of points is drawn
from a distribution $\mu$ and the number of points becomes infinite. \ \ In
the following we assume that $X,Y\subset%
\mathbb{R}
^{n}$ are both bounded convex regions, and that\ $\mu,\nu$ are probability
measures on $X,Y$ respectively, each of which are absolutely continuous with
respect to Lebesgue measure, with densities bounded away from zero. \ For each
$N,$ choose a set of points $P_{N}$ from $X\,$\ and define the measure
\[
\mu_{N}=\frac{1}{N}\sum_{i=1}^{N}\delta_{p_{i}}.
\]
In the sequel we will assume that $\mu_{N}\rightarrow\mu$ weakly on $X.$ \ By
arguments in the previous section, for each $N$ we have
\begin{align}
\pi_{N} &  \in P(X\times Y)\label{maxers}\\
\varphi_{N} &  \in\mathcal{K}_{Y}(P)_{0}\nonumber\\
\alpha_{N} &  \in P(X)\nonumber\\
\beta_{\varphi_{N}} &  \in P(Y)\nonumber
\end{align}
each of which are unique and can be used to identify the solution. \ \ We will
see that when $\mu,\nu$ are nice measures, all four of the objects
(\ref{maxers}) converge to limiting objects, respectively, each of which
uniquely defines a solution on the continuous space $X\times Y.$

\bigskip Inspecting (\ref{F2}), note the following. \ The first term is the
relative entropy of a known measure $\mu$ with respect to a measure $\alpha,$
to be determined. \ The second term is the integral of the natural log of the
average of an exponential function, over a small cell. \ As the size of the
cells becomes smaller, one expects the average to recover the value, and the
second term will become the negative total cost of the mass transport plan
between $\mu$ and its image $\beta$ under the mapping, which is also to be
determined. \ \ \ Under changes of measures
\begin{align*}
\mu &  \rightarrow\beta\\
\alpha &  \rightarrow\nu
\end{align*}
the first term becomes the negative relative entropy of $\beta$ with respect
to the measure $\nu.$ \ \ Thus we may formulate the problem by trying to
minimize the following function over the set of probability measures on $Y$ .
\
\begin{align}
\hat{F} &  :P(Y)\rightarrow%
\mathbb{R}
^{-}\nonumber\\
\hat{F}(\cdot) &  =-H_{v}(\cdot)-W^{2}(\mu,\cdot).\label{Fhatdef}%
\end{align}

The concave functional $\hat{F}$ will have a unique maximizer on $P(Y).$
\ Define%
\[
\hat{\beta}=\arg\max\hat{F}(\cdot)
\]
and choose $\hat{\varphi}$ such that
\[
\hat{\beta}=\left(  \nabla\hat{\varphi}\right)  _{\#}\mu.
\]

Our main theorem is the following. \ 

\begin{theorem}
\bigskip Let $\pi_{N}$ be a sequence of maximizers to (\ref{NP1}), and let
$\beta_{N}$ be the associated absolutely continuous right marginals
(\ref{acm}), \ \ Then
\[
\hat{F}(\beta_{N})\rightarrow\hat{F}(\hat{\beta}).
\]
In particular, because $\hat{F}$ is strictly concave,
\[
\beta_{N}\rightarrow\hat{\beta}.
\]

\end{theorem}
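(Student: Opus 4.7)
The plan is to show that $F_N(\pi_N)\to\hat F(\hat\beta)$ and simultaneously that $\hat F(\beta_N)$ is sandwiched between this value (up to vanishing error) and $\hat F(\hat\beta)$. Once $\hat F(\beta_N)\to\hat F(\hat\beta)$ is established, the stated convergence $\beta_N\to\hat\beta$ will follow from strict concavity of $\hat F$ together with uniqueness of $\hat\beta$ as its maximizer.

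First I would unpack $F_N(\pi_N)$ using the reformulation (\ref{F2}). Because $\pi_N$ has $Y$-marginal $\nu$ one has $\alpha_N(p_i)=\nu(E_i)$, so the entropy term reduces to $-H_\nu(\beta_N)$ exactly. The remaining term is $\tfrac1N\sum_i A(p_i)$ with
\[
A(p_i)=\ln\!\left(\tfrac{1}{\nu(E_i)}\int_{E_i}e^{-|p_i-y|^2/2}\,d\nu\right).
\]
Jensen's inequality (concavity of $\ln$) gives $A(p_i)\geq-\bar c_i$ where $\bar c_i=\tfrac{1}{\nu(E_i)}\int_{E_i}\tfrac12|p_i-y|^2\,d\nu$, and since Proposition \ref{three} says that the Laguerre map $y\mapsto p_i$ is an optimal plan between $\beta_N$ and $\mu_N$, summation yields
\[
F_N(\pi_N)\geq -H_\nu(\beta_N)-W^2(\mu_N,\beta_N).
\]
For the matching upper bound I would use a cumulant-type estimate: for bounded $Z$, $\ln\mathbb{E}e^{-Z}\leq -\mathbb{E}Z+C\,\mathrm{Var}(Z)$, combined with a Poincar\'e estimate $\mathrm{Var}_{\nu|_{E_i}}(c(p_i,\cdot))\leq C\delta_i^2$ where $\delta_i=\mathrm{diam}(E_i)$, producing
\[
F_N(\pi_N)\leq -H_\nu(\beta_N)-W^2(\mu_N,\beta_N)+C\sum_i\tfrac1N\delta_i^2.
\]

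To produce a lower bound on $F_N(\pi_N)$ in terms of $\hat F(\hat\beta)$, I would construct a test plan $\pi_N^{\mathrm{test}}$ by restricting the continuous optimal potential $\hat\varphi$ to $P_N$. The resulting Laguerre decomposition discretizes the optimal transport from $\mu$ to $\hat\beta$, and a Riemann-sum calculation (using that $d\hat\beta/d\nu$ is bounded above and away from zero) gives $F_N(\pi_N^{\mathrm{test}})\to\hat F(\hat\beta)$. Maximality then yields $F_N(\pi_N)\geq F_N(\pi_N^{\mathrm{test}})\to\hat F(\hat\beta)$. Combined with the two-sided comparison above, and with the stability $W^2(\mu_N,\beta_N)=W^2(\mu,\beta_N)+o(1)$ under $\mu_N\to\mu$ on compact $X$, one obtains $\hat F(\hat\beta)-o(1)\leq F_N(\pi_N)\leq\hat F(\beta_N)+o(1)$; the trivial $\hat F(\beta_N)\leq\hat F(\hat\beta)$ then forces $\hat F(\beta_N)\to\hat F(\hat\beta)$, and strict concavity completes the proof.

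The hard part will be showing $\sum_i\tfrac1N\delta_i^2\to 0$, both for the maximizer and for the test plan. Convex Laguerre cells of $\beta_N$-mass $1/N$ need not be equidimensional, so their diameters are not pinned down by their volumes alone, and an a priori bound is not immediate from the Nash equations. A natural route is to establish $N$-independent Lipschitz or H\"older control of the potentials $\varphi_N$, in the spirit of \cite{BCMO}, presumably by propagating the second-order elliptic bounds on $\det D^2\varphi$ advertised in the introduction; such control would simultaneously legitimize the Riemann-sum step used to evaluate $F_N(\pi_N^{\mathrm{test}})$ and close the main loop.
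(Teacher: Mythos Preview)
Your overall architecture matches the paper's: compare $\tilde F_N(\varphi_N)$ with $\hat F(\beta_N)$ up to a vanishing error, build a test competitor $\hat\varphi_N$ with $\tilde F_N(\hat\varphi_N)\to\hat F(\hat\beta)$, and sandwich using maximality of $\varphi_N$ and of $\hat\beta$. Your Jensen/cumulant bounds on the log-average-exponential term are a clean variant of the paper's oscillation estimate (\ref{thuy}); both reduce the comparison to control of the Laguerre cell diameters, and the entropy identity $-H_\nu(\beta_N)=\tfrac1N\sum_i\ln(\nu(E_i)/\mu_N(p_i))$ is exactly the paper's (\ref{limch}).

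Where you and the paper diverge is the mechanism for that diameter control, and this is where your proposal has a genuine gap. You suggest obtaining $N$-uniform H\"older bounds on the discrete potentials $\varphi_N$ by ``propagating the second-order elliptic bounds on $\det D^2\varphi$'' from the introduction; but those bounds are for the \emph{continuous} minimizer $\hat\varphi$, and there is no discrete analogue available here. The paper never proves uniform regularity of the $\varphi_N$. Instead it takes a subsequential limit $\varphi$ of the (automatically compact) normalized convex potentials and shows that this limit is \emph{strictly convex} via Caffarelli's theorem \cite{CaffJAMS}; strict convexity of the limit then forces the Laguerre cell diameters to collapse. The only input Caffarelli needs is that the source and target densities are bounded and bounded away from zero. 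For the test plan this is supplied by Lemma~\ref{holder} (the regularity of $d\hat\beta/d\nu$, which you invoke). For the maximizers it is supplied by a step you are missing: a direct ratio estimate $\alpha_N(p_i)/\alpha_N(p_j)\in[a_0,a_0^{-1}]$, with $a_0=(\min s/\max s)^2$, extracted from the Nash first-order condition (\ref{fla}) (Claim~\ref{13}). This passes to the limit to give $a_0\,\mu\le\alpha\le a_0^{-1}\mu$, so $(\nabla\varphi)_\#\alpha=\nu$ satisfies Caffarelli's hypotheses. That uniform bound on $\alpha_N$, coming specifically from the bargaining structure rather than from any PDE estimate, is the missing ingredient that closes your loop.
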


In this case, choose $\hat{\varphi}$ such that
\[
\hat{\beta}=\left(  \nabla\hat{\varphi}\right)  _{\#}\mu.
\]
We define $\hat{\varphi}$ as the potential solution, $\nabla\hat{\varphi}$ as
the map solution, and the measure $\pi=\left(  I\times\nabla\hat{\varphi
}\right)  _{\#}\mu\in P(X\times Y)$ as the measure solution of the Nash
bargaining problem. \ 

\subsection{Outline of Proof.}

We outline four steps, and then combine these in step 5 to get the proof.
\ The detailed proof of steps 1-4 will appear in section 4.

Note that this result almost follows from \cite[Theorem 4.1]{BCMO}. \ Instead
of a pure Wasserstein term, however, in our problem we have a term that
\textit{should }converge to a Wasserstein term, provided the measures
concentrate on the graph of a map. \ \ So we must justify that indeed the
maximimizers of our finite problem are indeed concentrating on the graph of
map. \ \ We reproduce some, but not all of the proof. \ \ 

\bigskip

Step 1. \ \ The functional $\hat{F}$ has maximizer, $\hat{\beta}.$ \ The
density $\frac{d\hat{\beta}}{d\nu}$ is bounded and H\"{o}lder continuous.

\bigskip

Step 2. \ There is a sequence of optimal transportation plans pairing $\mu
_{N}$ with $\hat{\beta}.$ \ To these we can associate a function $\hat
{\varphi}_{N}$ and an absolutely continuous measure $\hat{\beta}_{N.}$ \ Then
$\hat{\beta}_{N}\rightarrow\hat{\beta}$ and $\hat{F}(\hat{\beta}%
_{N})\rightarrow F(\hat{\beta}).$ \ The associated Laguerre cells have
diameters which go uniformly to $0.$

\bigskip

Step 3. \ For the maximizers $\varphi_{N}$ of the finite problem, we can also
associate an absolutely continuous measure $\beta_{N}$ (\ref{betaN}). \ The
limits of both exist, and the diameters of the associated Laguerre cells go
uniformly to $0.$

\bigskip

Step 4. \ Because the diameters of the Laguerre cells go to zero, the second
term $\ $in the $\tilde{F}_{N}$ functional (\ref{p3})  (defined on
$\varphi_{N}$ or $\hat{\varphi}_{N})$ converges to the second term in in the
$\hat{F}$ functional (defined on the associated $\beta_{N}).$ That is
\begin{equation}
\left\vert \frac{1}{N}\sum_{i=1}^{N}\ln\left(  \frac{1}{\nu(E_{i})}\int%
_{E_{i}}s(p_{i},y)d\nu(y)\right)  -W^{2}(\mu_{N},\beta_{N})\right\vert
\rightarrow0.\label{fagree}%
\end{equation}
It follows that
\begin{align}
\left\vert \hat{F}(\beta_{N})-\tilde{F}_{N}(\varphi_{N})\right\vert  &
\rightarrow0,\label{bbb}\\
\left\vert \hat{F}(\hat{\beta}_{N})-\tilde{F}_{N}(\hat{\varphi}_{N}%
)\right\vert  &  \rightarrow0.\label{bbb2}%
\end{align}
Step 5. \ \textbf{Proof of Theorem. } Choose $\varepsilon>0:$ \ Because we
have chosen $\varphi_{N}$ as a maximizer for (\ref{p3}) we have
\begin{equation}
\tilde{F}_{N}(\varphi_{N})\geq\tilde{F}_{N}(\hat{\varphi}_{N}).\label{maxprop}%
\end{equation}
Thus for $N$ large depending on $\varepsilon$,
\begin{align*}
\hat{F}\left(  \beta_{N}\right)   &  \geq\tilde{F}_{N}(\varphi_{N}%
)-\varepsilon\text{ by (\ref{bbb}) }\\
&  \geq\tilde{F}_{N}(\hat{\varphi}_{N})-\varepsilon\text{, by (\ref{maxprop})
}\\
&  \geq\hat{F}\left(  \hat{\beta}_{N}\right)  -2\varepsilon,\text{by
(\ref{bbb2}) }\\
&  \geq\hat{F}\left(  \hat{\beta}\right)  -3\varepsilon,\text{ by step 2.}%
\end{align*}
Thus,
\[
\lim_{N\rightarrow\infty}\hat{F}\left(  \beta_{N}\right)  \geq\hat{F}\left(
\hat{\beta}\right)  .
\]
But $\hat{F}\left(  \hat{\beta}\right)  $ is the maximum. \ By strict
concavity, it follows that $\beta_{N}\rightarrow\hat{\beta}.$ \ Thus
$\beta=\hat{\beta}.$

\section{Proof details}

\ Unless otherwise specified, $c$ will refer to an arbitrary cost function
\[
c:X\times Y\rightarrow%
\mathbb{R}
\]
and $W_{c}$ will refer to the Wasserstein distance, given the cost $c.$
$\ \ $The cost $c$ is assumed to have global bounds and global Lipschitz
bounds. \ \ \ \ The results of Step 1 should hold for general cost $c,$
whereas the results in Step 2 and 3 rely on Caffarelli's regularity theory for
$c=\left\vert x-y\right\vert ^{2}/2.$ \ 

Although we are most interested in the case $g(u)=\ln u,$ in Step 1 we
consider a slight generalization to functionals of the form
\[
F(\beta)=\int_{Y}g(\beta)\frac{d\beta}{d\nu}d\nu+W_{c}(\mu,\beta).
\]

\subsection{Step 1}

First an easy observation. \ 

\begin{claim}
$\label{costclaim}$Fixing $\mu,$ suppose that $\beta_{1},\beta_{2}$ are
probability measures, and that there is an open set $O$ such that $\beta
_{1}(E)=\beta_{2}(E)$ for all measurable $E\subset O,$ and $\beta_{1}(O)=$
$1-\eta.$ Then
\[
\left\vert W_{c}(\mu,\beta_{1})-W_{c}(\mu,\beta_{2})\right\vert \leq\left\Vert
c\right\Vert _{Lip\left(  X\times Y\right)  }\text{diam(}O)\eta,
\]

\end{claim}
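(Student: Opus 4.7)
My plan is to build a competitor transport plan between $\mu$ and $\beta_2$ by a local surgery on an optimal plan for $W_c(\mu,\beta_1)$, and to bound the resulting change in cost by $\|c\|_{\mathrm{Lip}(X\times Y)}\mathrm{diam}(O)\eta$; symmetry in $\beta_1,\beta_2$ will then give the absolute-value estimate. I read the hypothesis in the natural way that is consistent with the stated bound: the signed measure $\beta_1-\beta_2$ is supported in a region $D$ of diameter at most $\mathrm{diam}(O)$, with both $\beta_1|_D$ and $\beta_2|_D$ of mass $\eta$.

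First I take an optimal plan $\pi_1$ realizing $W_c(\mu,\beta_1)$ and decompose it as $\pi_1=\pi_1^{\mathrm{out}}+\pi_1^{\mathrm{in}}$ according to whether the $Y$-coordinate lies in $Y\setminus D$ or $D$. The piece $\pi_1^{\mathrm{out}}$ has right marginal $\beta_1|_{Y\setminus D}=\beta_2|_{Y\setminus D}$ and is kept intact in the competitor, while $\pi_1^{\mathrm{in}}$ has total mass $\eta$ and right marginal $\beta_1|_D$ and must be replaced with a plan having the same left marginal but right marginal $\beta_2|_D$. To construct the replacement, choose any coupling $\sigma\in\mathcal{M}(D\times D)$ between $\beta_1|_D$ and $\beta_2|_D$, disintegrate $\pi_1^{\mathrm{in}}$ with respect to its second marginal, and integrate against $\sigma$ via the gluing lemma (Villani, Chapter~1). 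This produces a measure $\tilde\pi$ supported in $X\times D$ with left marginal that of $\pi_1^{\mathrm{in}}$ and right marginal $\beta_2|_D$; then $\pi_2:=\pi_1^{\mathrm{out}}+\tilde\pi$ is admissible for $W_c(\mu,\beta_2)$.

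For each $(x,z)\in\mathrm{supp}\,\tilde\pi$ the gluing supplies an intermediate point $y\in D$ with $(x,y)\in\mathrm{supp}\,\pi_1^{\mathrm{in}}$, so the Lipschitz bound $c(x,z)\le c(x,y)+\|c\|_{\mathrm{Lip}}\mathrm{diam}(O)$ applies; integrating and using that $\tilde\pi$ carries total mass $\eta$ yields $\int c\,d\pi_2\le\int c\,d\pi_1+\|c\|_{\mathrm{Lip}}\mathrm{diam}(O)\eta$, which is the desired one-sided bound. The main technical point to execute carefully is the gluing construction, which requires the standard disintegration theorem to ensure that $\tilde\pi$ really has the advertised marginals; once this is in place, no finer properties of $c$ or of the measures beyond what is stated enter the estimate, so the lemma has a purely measure-theoretic character.
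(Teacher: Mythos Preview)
Your approach is essentially the paper's: take an optimal plan for $(\mu,\beta_1)$, leave the part over the region where $\beta_1=\beta_2$ untouched, and reroute the remaining mass of size $\eta$ within the region where they differ to obtain a competitor for $(\mu,\beta_2)$. The paper's proof is a two-line version of this (``shift mass vertically along the set of size $\eta$''), while you spell out the rerouting via disintegration and the gluing lemma; both yield the same cost increment bounded by a Lipschitz constant times a diameter times $\eta$.

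One point worth flagging: the claim as literally written has $\beta_1=\beta_2$ \emph{on} $O$ with $\beta_1(O)=1-\eta$, so the discrepancy sits in $Y\setminus O$, and indeed the paper's own proof concludes with $\mathrm{diam}(Y)$, not $\mathrm{diam}(O)$. Your reinterpretation---that the difference is supported in a set $D$ of diameter $\mathrm{diam}(O)$ with mass $\eta$---is what is actually used later (in the H\"older lemma, with $D\subset B_\varepsilon$ and bound $\mathrm{diam}(B_\varepsilon)$), so you have read the intended statement correctly and your tighter bound $\mathrm{diam}(D)$ is what the application needs.
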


\begin{proof}
\ Let $\pi$ be an optimal pairing for $\mu,\beta_{1}.$ \ We obtain a candidate
transport plan for pairing $\mu$ with $\beta_{2}$ by shifting mass vertically
along the set of size $\eta$ so that the right marginal is $\beta_{2}.$ \ The
cost of this plan can increase at most by $\left\Vert c\right\Vert _{Lip}%
$diam($Y)\eta.$
\end{proof}

\begin{lemma}
\label{linfbound}Suppose that $\hat{\beta}$ is the minimizer of a functional
of the form
\[
F(\beta)=\int_{Y}g(\beta)\frac{d\beta}{d\nu}d\nu+W_{c}(\mu,\beta)
\]
where $g$ is strictly increasing function,unbounded above, and
\[
l_{0}=\max_{s\in\lbrack0,1]}\left[  (s+1)g(s+1)-sg(s)\right]  <\infty.
\]
Then, the density $\frac{d\hat{\beta}}{d\nu}$ is bounded above, that is
\[
\hat{\beta}(E)\leq g^{-1}\left[  l_{0}+\text{osc}\left(  c\right)  \right]
\nu(E)
\]
for all measurable $E\subset Y$ . \ 
\end{lemma}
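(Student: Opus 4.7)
The plan is to argue by contradiction via an explicit mass rearrangement of the minimizer. Set $C := g^{-1}(l_0 + \text{osc}(c))$, write $\rho := d\hat{\beta}/d\nu$, and let $h(u) := ug(u)$, so that the energy part of the functional is $\int_Y h(\rho)\,d\nu$. First I would record the preliminary fact that $C \geq 1$: since $l_0 \geq h(1) - h(0) = g(1)$ (using the convention $0 \cdot g(0) = 0$), we have $l_0 + \text{osc}(c) \geq g(1)$, hence $C = g^{-1}(l_0 + \text{osc}(c)) \geq 1$. Consequently, if the bound fails and $\{\rho > C\}$ has positive $\nu$-measure, then so does $\{\rho > 1\}$, and a quick accounting using $\int \rho\,d\nu = 1 = \nu(Y)$ forces $\nu(\{\rho \leq 1\}) > 0$ as well.

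Next I would pick $t \in (C, \operatorname{ess\,sup}\rho)$ so that $E_t := \{\rho > t\}$ has positive $\nu$-measure, and build a competitor $\tilde{\beta}$ as follows. Choose small disjoint subsets $B \subset E_t$ and $A \subset \{\rho \leq 1\}$ with $\nu(A) = \eta$, where $\eta := \int_B(\rho - t)\,d\nu > 0$. Define the density of $\tilde{\beta}$ to equal $t$ on $B$, to equal $\rho + 1$ on $A$, and to equal $\rho$ elsewhere; by construction $\tilde{\beta} \in P(Y)$. The key estimates are then:
\begin{itemize}
\item On $B$, monotonicity of $g$ gives $h(\rho) - h(t) = \rho g(\rho) - tg(t) \geq (\rho - t)g(t)$, so the entropy contribution on $B$ changes by at most $-g(t)\,\eta$.
\item On $A$, since $\rho \in [0,1]$ pointwise, the hypothesis on $l_0$ gives $h(\rho + 1) - h(\rho) \leq l_0$, so the entropy contribution on $A$ changes by at most $l_0\,\eta$.
\item For the Wasserstein term, start with an optimal plan $\pi$ between $\mu$ and $\hat{\beta}$, and reroute the $\eta$ units of $\pi$-mass destined for $B$ to land uniformly on $A$ instead; the resulting plan has right marginal $\tilde{\beta}$ and total cost increased by at most $(\sup c - \inf c)\eta = \text{osc}(c)\,\eta$.
\end{itemize}

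Combining these three estimates,
\[
F(\tilde{\beta}) - F(\hat{\beta}) \leq \bigl(l_0 + \text{osc}(c) - g(t)\bigr)\eta = \bigl(g(C) - g(t)\bigr)\eta < 0,
\]
since $g$ is strictly increasing and $t > C$. This contradicts the minimality of $\hat{\beta}$ and so $\rho \leq C$ $\nu$-a.e., which is exactly the claimed inequality $\hat{\beta}(E) \leq C\,\nu(E)$.

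The main obstacle is the Wasserstein estimate: Claim~\ref{costclaim} as stated yields only the weaker bound $\|c\|_{\mathrm{Lip}}\,\text{diam}(Y)\,\eta$, which would produce the wrong constant. The fix is not to invoke the claim as a black box but to exploit the localized structure of the perturbation---namely, all reshuffled mass $\eta$ travels from the single target region $B$ to the single target region $A$, so the cost change is genuinely bounded by the oscillation $\text{osc}(c)$ rather than by a Lipschitz norm times a diameter. A secondary (but routine) issue is checking that the sets $A$, $B$ can actually be chosen disjoint with the required $\nu$-measure; this follows from the positive-measure claims established in the first paragraph together with taking $\nu(B)$ small enough to make $\eta$ small.
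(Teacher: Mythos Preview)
Your proof is correct and follows the same core strategy as the paper's: construct a competitor by shifting mass from a region where the density exceeds the claimed bound to a region where the density lies in $[0,1]$, control the entropy change on the receiving set via the $l_0$ hypothesis, the entropy change on the donor set via the growth of $g$, and the Wasserstein change by $\mathrm{osc}(c)$. The technical implementations differ slightly. The paper partitions $Y$ into integer level sets $E_M=\{\rho\in[M,M+1)\}$, sets the competitor density to zero on $E_M$, and bounds the donor-side entropy via Jensen's inequality for $u\mapsto ug(u)$ (tacitly assuming convexity of $ug(u)$, which is not among the lemma's stated hypotheses, though it holds for $g=\ln$). You instead truncate at a continuous level $t$ and use the elementary inequality $\rho g(\rho)-t g(t)\geq(\rho-t)g(t)$, which requires only that $g$ be increasing; this is a small improvement, since your argument matches the hypotheses as written. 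Your remark about the Wasserstein bound is also on point: the paper invokes Claim~\ref{costclaim} but then writes $\mathrm{osc}(c)$ rather than $\|c\|_{\mathrm{Lip}}\,\mathrm{diam}(Y)$, so in effect both proofs rely on the same direct rerouting estimate you spell out explicitly.
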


Note that in the case we are interested, $g(u)=\ln u.$ \ 

\begin{proof}
\ First, by evaluating the functional $F$ on the measure $\nu,$ we see
\[
F(\nu)=\int g(1)d\nu+W_{c}(\mu,\nu)<+\infty.
\]
It follows that the minimum $F(\hat{\beta})$ is also finite, in particular
$\frac{d\hat{\beta}}{d\nu}$ cannot be infinite on a set of positive $\beta$
measure, so $\frac{d\hat{\beta}}{d\nu}$ exists a.e. $d\beta$ and hence a.e.
$d\nu.$ \ We can decompose $Y$ (a.e $\nu)$ into a countable union:
\begin{align*}
Y &  =%
{\displaystyle\bigcup\limits_{M=0}^{\infty}}
E_{M}\\
E_{M} &  =\left\{  y:\frac{d\hat{\beta}}{d\nu}(y)\in\lbrack M,M+1)\right\}  .
\end{align*}
\ Choose an $M$ large, and assume that $\delta=\hat{\beta}(E_{M})>0$. \ Let
$\varepsilon=\nu(E_{M}).$ \ It follows that $\varepsilon\in\left[
\frac{\delta}{M+1},\frac{\delta}{M}\right]  .$ \ Choose a subset $\tilde
{E}\subset E_{0}$ such that $\nu(\tilde{E})=\delta$ \ (Such a set exists by a
Markov's inequality argument for any $M>1$.)\ \ By Jensen's Inequality (the
function $ug(u)$ is convex)
\begin{align*}
\int_{E_{M}}g\left(  \frac{d\hat{\beta}}{d\nu}\right)  \frac{d\hat{\beta}%
}{d\nu}d\nu &  =\varepsilon\int_{E_{M}}g\left(  \frac{d\hat{\beta}}{d\nu
}\right)  \frac{d\hat{\beta}}{d\nu}\frac{d\nu}{\varepsilon}\\
&  \geq\varepsilon g\left(  \int\frac{d\hat{\beta}}{d\nu}\frac{d\nu
}{\varepsilon}\right)  \int\frac{d\hat{\beta}}{d\nu}\frac{d\nu}{\varepsilon}\\
&  =\varepsilon g\left(  \frac{\delta}{\varepsilon}\right)  \frac{\delta
}{\varepsilon}\\
&  =\delta g\left(  \frac{\delta}{\varepsilon}\right)  .
\end{align*}
Now we create a competing measure, by moving the mass from $E_{M}$ to
$\tilde{E}.$ \ That is,
\[
\frac{d\beta^{\ast}}{d\nu}=%
\begin{cases}
0 & \text{on }E_{M}\\
\frac{d\hat{\beta}}{d\nu}+\chi_{\tilde{E}} & \text{otherwise}%
\end{cases}
\text{ .}%
\]
This gives another probability measure $\beta^{\ast}.$ \ Now $\beta^{\ast
}=\hat{\beta}$ except on these two small sets, so we have
\begin{align*}
&  \int_{Y}g\left(  \frac{d\hat{\beta}}{d\nu}\right)  \frac{d\hat{\beta}}%
{d\nu}d\nu-\int_{Y}g\left(  \frac{d\beta^{\ast}}{d\nu}\right)  \frac
{d\beta^{\ast}}{d\nu}d\nu\\
&  =\int_{E_{M}}g\left(  \frac{d\hat{\beta}}{d\nu}\right)  \frac{d\hat{\beta}%
}{d\nu}d\nu-\int_{E_{M}}g\left(  \frac{d\beta^{\ast}}{d\nu}\right)
\frac{d\beta^{\ast}}{d\nu}d\nu+\int_{\tilde{E}}g\left(  \frac{d\hat{\beta}%
}{d\nu}\right)  \frac{d\hat{\beta}}{d\nu}d\nu-\int_{\tilde{E}}g\left(
\frac{d\beta^{\ast}}{d\nu}\right)  \frac{d\beta^{\ast}}{d\nu}d\nu\\
&  =\int_{E_{M}}g\left(  \frac{d\hat{\beta}}{d\nu}\right)  \frac{d\hat{\beta}%
}{d\nu}d\nu-0+\int_{\tilde{E}}g\left(  \frac{d\hat{\beta}}{d\nu}\right)
\frac{d\hat{\beta}}{d\nu}d\nu-\int_{\tilde{E}}g\left(  \frac{d\hat{\beta}%
}{d\nu}+1\right)  \left(  \frac{d\hat{\beta}}{d\nu}+1\right)  d\nu\\
&  \geq\delta g\left(  \frac{\delta}{\varepsilon}\right)  -\int_{\tilde{E}%
}\left[  g\left(  \frac{d\hat{\beta}}{d\nu}+1\right)  \left(  \frac
{d\hat{\beta}}{d\nu}+1\right)  -\int_{\tilde{E}}g\left(  \frac{d\hat{\beta}%
}{d\nu}\right)  \frac{d\hat{\beta}}{d\nu}\right]  d\nu\\
&  \geq\delta g\left(  \frac{\delta}{\varepsilon}\right)  -\int_{\tilde{E}%
}l_{0}d\nu=\delta g\left(  \frac{\delta}{\varepsilon}\right)  -\delta l_{0}.
\end{align*}
That is
\begin{align*}
&  \int_{Y}g(\hat{\beta})\frac{d\hat{\beta}}{d\nu}d\nu+W_{c}(\mu,\hat{\beta
})-\left(  \int_{Y}g(\beta^{\ast})\frac{d\beta^{\ast}}{d\nu}d\nu+W_{c}%
(\mu,\beta^{\ast})\right)  \\
&  \geq\delta\left[  g\left(  \frac{\delta}{\varepsilon}\right)
-l_{0}\right]  -\delta\text{osc}\left(  c\right)
\end{align*}
using Claim \ref{costclaim}. Now using the minimizing property of $\hat{\beta
},$ we have
\[
g\left(  \frac{\delta}{\varepsilon}\right)  \leq l_{0}+\text{osc}\left(
c\right)  .
\]
We conclude%
\[
M\leq g^{-1}\left[  l_{0}+\text{osc}\left(  c\right)  \right]  .
\]
In particular, for larger values of $M,$ $\beta(E_{m})=0.$
\end{proof}

\begin{lemma}
$\label{holder}$Suppose that $\hat{\beta}$ is as in the previous Lemma and in
addition that
\[
\frac{d^{2}}{du^{2}}\left(  ug(u)\right)  >0.
\]
\ Then the density $\frac{d\hat{\beta}}{d\nu}$ is H\"{o}lder $1/2$ continuous. \ 
\end{lemma}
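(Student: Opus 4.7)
The strategy mirrors the $L^{\infty}$ argument of Lemma \ref{linfbound}, now extracting a quantitative oscillation estimate from the strict convexity of $\phi(u):=ug(u)$. From Lemma \ref{linfbound}, the density $h:=d\hat{\beta}/d\nu$ is bounded above by some $M$, and on $(0,M]$ continuity and strict positivity of $\phi''$ yield a uniform lower bound $\phi''\geq c_{0}>0$.

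Given two Lebesgue points $y_{1},y_{2}$ of $h$ with $d:=|y_{1}-y_{2}|$ small and $h(y_{1})\geq h(y_{2})+\delta$, I would build a competitor $\beta^{\ast}$ as follows. Choose a scale $s>0$ and set $B_{i}=B(y_{i},s)$; since $\nu$ is absolutely continuous with positive bounded density, after a trivial adjustment one may take $\nu(B_{1})=\nu(B_{2})=\varepsilon$. Define $\beta^{\ast}$ by moving a slice of height $\eta$ from $B_{1}$ to $B_{2}$: let $d\beta^{\ast}/d\nu=h-\eta$ on $B_{1}$, $h+\eta$ on $B_{2}$, and $h$ elsewhere. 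For $\eta$ sufficiently small this yields a valid nonnegative probability density.

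Taylor-expanding the entropy integrand and using Lebesgue-point convergence,
\[
\int[\phi(h^{\ast})-\phi(h)]\,d\nu=\eta\varepsilon\bigl[\phi'(h(y_{2}))-\phi'(h(y_{1}))\bigr]+o(\eta\varepsilon)+\mathcal{E}(s),
\]
with $\mathcal{E}(s)\to 0$ as $s\to 0$. Strict convexity gives $\phi'(h(y_{1}))-\phi'(h(y_{2}))\geq c_{0}\delta$, so the entropy change is dominated by $-c_{0}\delta\eta\varepsilon$. Claim \ref{costclaim}, applied with $O$ an open set containing $B_{1}\cup B_{2}$ of diameter at most $d+2s$, bounds the Wasserstein change:
\[
W_{c}(\mu,\beta^{\ast})-W_{c}(\mu,\hat{\beta})\leq\|c\|_{\mathrm{Lip}}(d+2s)\eta\varepsilon.
\]
Minimality of $\hat{\beta}$ forces these two changes to sum to a nonnegative quantity, producing the desired oscillation bound linking $\delta$ to $d$.

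The main obstacle is handling the Lebesgue-point error $\mathcal{E}(s)$ quantitatively: one cannot freely let $s\to 0$ without also shrinking the competitor perturbation. I would reformulate the argument directly in terms of averaged densities $\bar h_{i}:=\varepsilon^{-1}\int_{B_{i}}h\,d\nu$, obtaining a Morrey-type oscillation estimate on $\bar h$ at each scale $s$, and then upgrade to pointwise H\"older continuity of the continuous representative through a Campanato iteration on dyadic balls. Balancing the quadratic convexity gain against the linear Wasserstein loss across scales with $s\sim d^{1/2}$ produces the claimed exponent $1/2$.
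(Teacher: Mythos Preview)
Your overall strategy---build a competitor, trade entropy gain against the Wasserstein loss controlled by Claim~\ref{costclaim}---is exactly the paper's. The execution diverges, and the divergence creates the gap you yourself flag.

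The paper does not work at two Lebesgue points with a slice-transfer of height $\eta$. Instead, it fixes a \emph{single} ball $B_{\varepsilon}(y)$, assumes $\operatorname{osc}_{B_{\varepsilon}}h\geq 4\delta$, and chooses level-set pieces $E_{+}\subset\{h>L+\delta\}\cap B_{\varepsilon}$ and $E_{-}\subset\{h<L-\delta\}\cap B_{\varepsilon}$ of equal $\nu$-measure $\eta$. The competitor replaces $h$ by the common average $(a+b)/2$ on $E_{+}\cup E_{-}$. Jensen on each piece plus uniform convexity of $f(u)=ug(u)$ give an entropy drop of at least $c_{0}\delta^{2}\eta$, while Claim~\ref{costclaim} bounds the Wasserstein increase by $C\eta\varepsilon$ (both sets live in the same ball, so only the radius $\varepsilon$ enters, not a separation $d$). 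Minimality yields $\delta^{2}\lesssim\varepsilon$ directly---no Lebesgue points, no iteration.

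Your slice-transfer gives only a \emph{linear}-in-$\delta$ first-order gain $c_{0}\delta\eta\varepsilon$, and the coefficient is $\overline{\phi'(h)}_{B_{2}}-\overline{\phi'(h)}_{B_{1}}$, not $\phi'(\bar h_{2})-\phi'(\bar h_{1})$; relating these requires exactly the oscillation control you are trying to prove. Your last paragraph speaks of a ``quadratic convexity gain,'' but that gain only appears if you switch to an \emph{averaging} competitor, which you have not done. The scaling $s\sim d^{1/2}$ is asserted without derivation, and in fact with an averaging competitor the correct balance is $s\sim d$ (or simply $d=0$, as in the paper), giving $\delta\lesssim\varepsilon^{1/2}$ immediately. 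As written, the Campanato step is a hope rather than an argument; the clean fix is to adopt the paper's averaging competitor on level sets, after which no iteration is needed.
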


\begin{proof}
Choose any point $y$ in the interior of $Y.$ \ \ We claim that given
$\delta>0,$ there is an $\iota>0$ to be determined, such that for
\begin{equation}
\varepsilon=\iota\delta^{2}\label{oass}%
\end{equation}
we have
\[
\text{osc}_{B_{\varepsilon}(y)}\left(  \frac{d\hat{\beta}}{d\nu}\right)
<4\delta.
\]
Assume not. \ \ Suppose that osc$_{B_{\varepsilon}(y)}\left(  \frac
{d\hat{\beta}}{d\nu}\right)  \geq4\delta.$ \ Let
\begin{align*}
m_{+} &  =\text{ess}\sup_{B_{\varepsilon}}\frac{d\hat{\beta}}{d\nu}\\
m_{-} &  =\text{ess~}\inf_{B_{\varepsilon}}\frac{d\hat{\beta}}{d\nu}\\
L &  =\frac{m_{+}+m_{-}}{2}.
\end{align*}
It follows that
\begin{align*}
\nu\left(  \left\{  \frac{d\hat{\beta}}{d\nu}<L-\delta\right\}  \cap
B_{\varepsilon}(y)\right)   &  >0\\
\nu\left(  \left\{  \frac{d\hat{\beta}}{d\nu}>L+\delta\right\}  \cap
B_{\varepsilon}(y)\right)   &  >0.
\end{align*}
Choose small $\eta>0$ and sets $E_{-},E_{+}$ such that $\nu(E_{-})=\nu
(E_{+})=\eta$ and
\begin{align*}
E_{-} &  \subset\left\{  \frac{d\hat{\beta}}{d\nu}<L-\delta\right\}  \cap
B_{\varepsilon}(y)\\
E_{+} &  \subset\left\{  \frac{d\hat{\beta}}{d\nu}>L+\delta\right\}  \cap
B_{\varepsilon}(y).
\end{align*}
Then let%
\begin{align*}
\hat{\beta}\left(  E_{+}\right)   &  =a\eta\\
\hat{\beta}\left(  E_{-}\right)   &  =b\eta
\end{align*}
so that
\[
a-b\geq2\delta.
\]
Consider the measure $\beta^{\ast}$ created by replacing $\hat{\beta}$ with
the average value of $\hat{\beta}$ on the sets $E_{-}$ and $E_{+}$, namely
\begin{equation}
\frac{d\beta^{\ast}}{d\nu}=%
\begin{cases}
\frac{a+b}{2} & \text{on }E_{-}\cup E_{+}\\
\frac{d\hat{\beta}}{d\nu} & \text{otherwise}%
\end{cases}
\text{ .}\label{combs}%
\end{equation}
Now let $f(u)=ug(u)$ and consider the quantity
\[
\int_{Y}g(\beta)\frac{d\beta}{d\nu}d\nu.
\]
Note that
\begin{equation}
\frac{d^{2}}{du^{2}}f(u)\geq c_{0}(M)>0\text{ for }u\leq M\label{sconvexf}%
\end{equation}
in particular, $f$ is uniformly convex. \ By calculus,
\begin{align*}
f(a)+f(b)-2f\left(  \frac{a+b}{2}\right)   &  \geq\frac{1}{4}\left(
a-b\right)  ^{2}\min_{u\in\lbrack a,b]}f^{\prime\prime}(u)\\
&  \geq c_{0}\delta^{2}.
\end{align*}
Applying Jensen's inequality on the sets $E\pm$
\begin{align}
\int_{E_{-}}f\left(  \frac{d\hat{\beta}}{d\nu}\right)  \frac{d\nu}{\eta}%
+\int_{E_{+}}f\left(  \frac{d\hat{\beta}}{d\nu}\right)  \frac{d\nu}{\eta} &
\geq f\left(  \int_{E_{-}}\left(  \frac{d\hat{\beta}}{d\nu}\right)  \frac
{d\nu}{\eta}\right)  +f\left(  \int_{E_{+}}\left(  \frac{d\hat{\beta}}{d\nu
}\right)  \frac{d\nu}{\eta}\right)  \label{a44}\\
&  =f\left(  \frac{1}{\eta}\int_{E_{-}}d\hat{\beta}\right)  +f\left(  \frac
{1}{\eta}\int_{E_{+}}d\hat{\beta}\right)  \nonumber\\
&  =f\left(  a\right)  +f\left(  b\right)  \nonumber\\
&  \geq2f\left(  \frac{a+b}{2}\right)  +c_{0}\delta^{2}\label{a45}%
\end{align}
Now recall $\nu(E\pm)=\eta$, (\ref{combs})%
\begin{equation}
2f\left(  \frac{a+b}{2}\right)  =\int_{E_{-}\cup E_{+}}f\left(  \frac
{d\beta^{\ast}}{d\nu}\right)  \frac{1}{\eta}d\nu.\label{a46}%
\end{equation}
Multiplying $\eta$ across (\ref{a44}), (\ref{a45}), and (\ref{a46}) and
combining, we see. \ \
\[
\int_{E_{-}}f\left(  \frac{d\hat{\beta}}{d\nu}\right)  d\nu+\int_{E_{+}%
}f\left(  \frac{d\hat{\beta}}{d\nu}\right)  d\nu-\int_{E_{-}\cup E_{+}%
}f\left(  \frac{d\beta^{\ast}}{d\nu}\right)  d\nu\geq\delta^{2}c_{0}(M)\eta,
\]
\ where
\[
M=\left\Vert \frac{d\hat{\beta}}{d\nu}\right\Vert _{L^{\infty}}%
\]
is controlled by Lemma \ref{linfbound}. \ \ Comparing with Claim
\ref{costclaim}, noting that the mass moved is bounded by 2$M\eta,$ and using
the maximality of $\hat{\beta},$ we conclude that
\[
0\geq F(\hat{\beta})-F(\beta^{\ast})\geq\delta^{2}c_{0}(M)\eta-2M\eta
\left\Vert c\right\Vert _{Lip}\text{diam}\left(  B_{\varepsilon}\right)  .
\]
Thus%
\[
\delta^{2}\leq\frac{2M}{c_{0}(M)}\left\Vert c\right\Vert _{Lip}2\varepsilon.
\]
Revisiting our original assumption (\ref{oass}) ,\ we take
\[
\iota=\frac{c_{0}(M)}{5M\left\Vert c\right\Vert _{Lip}}%
\]
and obtain a contradiction. \ \ Immediately we conclude that $\frac
{d\hat{\beta}}{d\nu}$ is $C^{0,1/2}.$
\end{proof}

\bigskip

\subsection{Step 2}

This is essentially \cite[Steps 4 and 5, section 4]{BCMO}. \ In short: by
Brenier's Theorem, we may find $\phi_{N}$ such that $\nabla\phi_{N}\hat{\beta
}=\mu_{N}.$ \ \ Because $\mu_{N}\rightarrow\mu$, the potentials $\phi_{N}$
converge uniformly. \ \ It follows that the Legendre transforms $\hat{\varphi
}_{N}$ converge uniformly as well, to $\hat{\varphi}$. \ If the diameter of
the Laguerre cells does not shrink to zero, there will be a line segment on
which the convex potential $\hat{\varphi}$ is linear. This contradicts
Caffarelli's regularity result \cite{CaffJAMS}, which ensures that the
potential defining the mapping between the absolutely continuous measures
$\mu$ and $\hat{\beta}$ must be strictly convex.

Each $\phi_{N}$ defines a set of Laguerre cells $\left\{  E_{i}\right\}  .$ We
can define a measure:
\[
\hat{\beta}_{N}(Z)=\frac{1}{N}\sum_{i=1}^{N}\frac{\nu(E_{i}\cap Z)}{\nu
(E_{i})}%
\]
which has the property that
\[
\nabla\phi_{N}\hat{\beta}_{N}=\mu_{N}.
\]
The densities $\frac{d\hat{\beta}_{N}}{d\nu}$ will converge pointwise
uniformly to $\frac{d\hat{\beta}}{d\nu},$ because the density $\frac
{d\hat{\beta}_{N}}{d\nu}\left(  y\right)  $ is simply the average of the
continuous density of $\frac{d\hat{\beta}}{d\nu}$ over the cell containing
$y,$ and the cell diameters are shrinking to zero. \ \ \ It follows quickly
that $\hat{F}(\hat{\beta}_{N})\rightarrow\hat{F}\left(  \hat{\beta}\right)  .$

\subsection{\bigskip Step 3}

For each finite player Nash bargaining problem, we choose $\varphi_{N}$
maximizing (\ref{p3}), and obtain a set of Laguerre cells $\left\{
E_{i}\right\}  .$ \ As before, we construct a probability measure on $Y$%
\begin{equation}
\beta_{N}(Z)=\frac{1}{N}\sum_{i=1}^{N}\frac{\nu(E_{i}\cap Z)}{\nu(E_{i}%
)}.\label{betaN}%
\end{equation}
We would like to conclude these $\beta_{N}\rightarrow\beta,$ but we do not yet
have a unique limit $\beta$. \ \ We need to use properties of $\beta_{N}$ to
conclude properties of possible limits $\beta$.

\begin{claim}
\label{13}The measures $\alpha_{N}$ are uniformly mutually absolutely
continuous with respect to $\mu_{N}.$ \ 
\end{claim}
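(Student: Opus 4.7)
The plan is to exploit the first-order optimality condition satisfied by $\pi_N$ to show that all players receive comparable total utility $\kappa_i := \int_{E_i} s(p_i, y)\, d\nu$, and then to use the partition identity $\sum_i \nu(E_i) = 1$ to pin each $\nu(E_i)$ at the order $1/N = \mu_N(p_i)$.

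First, I would revisit the variational argument from the proof of Proposition \ref{three}: since $\pi_N$ is a maximizer, for any $y_1$ in the support of $\pi_N|_{\{p_i\}\times Y}$ and any other player $p_j$, the inequality (\ref{fla}) gives
\begin{equation*}
\frac{s(p_i, y_1)}{\kappa_i} \;\geq\; \frac{s(p_j, y_1)}{\kappa_j}.
\end{equation*}
Because $X, Y$ are bounded and $s$ is continuous and strictly positive, there exist constants $0 < s_{\min} \leq s_{\max} < \infty$ with $s(x,y) \in [s_{\min}, s_{\max}]$. Thus $s(p_j, y_1)/s(p_i, y_1) \geq \rho := s_{\min}/s_{\max}$, and swapping $i \leftrightarrow j$ yields the symmetric bound $\rho\, \kappa_i \leq \kappa_j \leq \rho^{-1}\kappa_i$ for every pair of indices, with $\rho$ independent of $N$. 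Note that every $E_i$ must have positive $\nu$-measure at the maximizer, since otherwise $F(\pi_N) = -\infty$; thus such a $y_1$ always exists.

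Next, the Laguerre cells partition $Y$ up to a $\nu$-null set, so $\sum_i \nu(E_i) = 1$ and hence $\sum_i \kappa_i \in [s_{\min}, s_{\max}]$. Combining $N\min_i \kappa_i \leq \sum_i \kappa_i \leq N\max_i \kappa_i$ with the ratio bound gives $\kappa_i \in [\rho s_{\min}/N,\; s_{\max}/(\rho N)]$ for every $i$. Using the trivial sandwich $s_{\min}\nu(E_i) \leq \kappa_i \leq s_{\max}\nu(E_i)$ then yields
\begin{equation*}
\frac{\rho\, s_{\min}}{s_{\max}\, N} \;\leq\; \alpha_N(p_i) \;=\; \nu(E_i) \;\leq\; \frac{s_{\max}}{\rho\, s_{\min}\, N}.
\end{equation*}
Dividing by $\mu_N(p_i) = 1/N$ bounds the Radon--Nikodym derivative $d\alpha_N/d\mu_N$ above and below by constants depending only on the oscillation of $\ln s$ over $X \times Y$, not on $N$ or $i$. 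This is precisely uniform mutual absolute continuity.

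I do not anticipate a serious obstacle: the entire argument reduces to a first-order optimality condition already extracted in Proposition \ref{three} together with the finite oscillation of $c = -\ln s$ on the compact product $X \times Y$. The only mild care required is to confirm that (\ref{fla}) is available at some $y_1$ in each $E_i$, which follows automatically because any cell with vanishing $\nu$-mass would force the functional to $-\infty$, contradicting the maximality of $\pi_N$.
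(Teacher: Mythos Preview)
Your proposal is correct and follows essentially the same approach as the paper: both use the first-order optimality inequality (\ref{fla}) from Proposition~\ref{three} together with the global bounds $s_{\min}\le s\le s_{\max}$ to conclude that all cells have comparable $\nu$-mass, and then normalize via $\sum_i\nu(E_i)=1$. The only cosmetic difference is that the paper records the conclusion first as a uniform bound on the ratios $\nu(E_i)/\nu(E_j)\ge a_0=(s_{\min}/s_{\max})^2$ and then deduces $\frac{a_0}{a_0+N-1}\le\alpha_N(p_i)\le\frac{1}{1+(N-1)a_0}$, whereas you pass through the total utilities $\kappa_i$ directly; the resulting constants are the same up to harmless factors.
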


\begin{proof}
Proceeding as in the proof of Proposition \ref{three}, we have
\[
\frac{\int_{_{B_{\varepsilon}}}s(p_{1},y)d\nu}{\int_{E_{1}}s(p_{1},y)d\nu}%
\geq\frac{\int_{_{B_{\varepsilon}}}s(p_{2},y)d\nu}{\int_{E_{2}}s(p_{2},y)d\nu}%
\]
or
\[
\frac{\int_{E_{2}}s(p_{2},y)d\nu}{\int_{E_{1}}s(p_{1},y)d\nu}\geq\frac
{\int_{_{B_{\varepsilon}}}s(p_{2},y)d\nu}{\int_{_{B_{\varepsilon}}}%
s(p_{1},y)d\nu_{1}}\geq\frac{\min s(x,y)}{\max s(x,y)}>0.
\]
Similarly,
\[
\frac{\int_{E_{1}}s(p_{1},y)d\nu}{\int_{E_{2}}s(p_{2},y)d\nu}\geq\frac{\min
s(x,y)}{\max s(x,y)}>0
\]
so also
\[
\frac{\nu(E_{1})}{\nu(E_{2})}\geq\frac{\frac{1}{\max s(x,y)}\int_{Y}%
s(p_{1},y)d\nu_{2}}{\frac{1}{\min s(x,y)}\int_{Y}s(p_{2},y)d\nu_{1}}>\left[
\frac{\min s(x,y)}{\max s(x,y)}\right]  ^{2}=a_{0}>0.
\]
This will be true for any pair, so the ratios of the measures is uniformly
bounded:
\begin{equation}
\frac{a_{0}}{a_{0}+N-1}\leq\alpha(p_{i})\leq\frac{1}{1+(N-1)a_{0}%
}.\label{abbelow}%
\end{equation}

\end{proof}

\begin{claim}
The measures $\alpha_{N}$ have a weak limit $\alpha$ which is absolutely
continuous with respect to $\mu.$ \ \ The density $d\alpha/d\mu$ is bounded
away from zero. \ \ \ \ 
\end{claim}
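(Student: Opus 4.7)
The plan is to translate the uniform ratio bounds of Claim \ref{13} into pointwise control of the density $d\alpha_N/d\mu_N$, then pass to a weak limit by compactness of $P(X)$. Since $\mu_N(\{p_i\}) = 1/N$, estimate (\ref{abbelow}) can be rewritten as
\[
a_0 \;\leq\; \frac{N a_0}{a_0 + N - 1} \;\leq\; \frac{d\alpha_N}{d\mu_N}(p_i) \;\leq\; \frac{N}{1 + (N-1)a_0} \;\leq\; \frac{1}{a_0},
\]
with the outer inequalities following from $a_0 \leq 1$. Equivalently,
\[
a_0 \, \mu_N(E) \;\leq\; \alpha_N(E) \;\leq\; a_0^{-1} \, \mu_N(E)
\]
for every Borel $E \subset X$, uniformly in $N$.

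Next, since $X$ is compact, Prokhorov's theorem yields a subsequence $\alpha_{N_k}$ converging weakly to some $\alpha \in P(X)$. Integrating the previous inequality against a nonnegative $f \in C(X)$ and using $\mu_N \to \mu$ weakly gives, in the limit,
\[
a_0 \int_X f \, d\mu \;\leq\; \int_X f \, d\alpha \;\leq\; a_0^{-1} \int_X f \, d\mu.
\]
Outer and inner regularity of Radon measures on the compact space $X$ then upgrade this to $a_0 \mu(E) \leq \alpha(E) \leq a_0^{-1}\mu(E)$ for every Borel $E$, so $\alpha \ll \mu$ and the Radon-Nikodym density $d\alpha/d\mu$ lies in $[a_0, a_0^{-1}]$ almost everywhere; in particular it is bounded away from zero.

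To promote subsequential convergence to full convergence of $\alpha_N$, I would combine the above with the already-established convergence $\beta_N \to \hat{\beta}$ and pass to a weak limit on the joint plans $\pi_N \in P(X \times Y)$. Any subsequential limit $\pi$ has marginals $\alpha$ and $\hat{\beta}$, and since $c$-cyclical monotonicity is preserved under weak convergence, Brenier's theorem (applied with cost $|x-y|^2/2$ between the absolutely continuous $\alpha$ and $\hat{\beta}$) identifies $\pi$ as being concentrated on the graph of a single Brenier map, determining $\alpha$ independently of the subsequence. The main obstacle is this identification step: the density bounds themselves are a direct consequence of Claim \ref{13} and compactness, but verifying that the limit plan inherits $c$-cyclical monotonicity takes care, because the first marginals $\alpha_N$ sit on drifting finite sets $P_N$ rather than on a fixed support.
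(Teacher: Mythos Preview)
Your first three paragraphs are correct and are a fleshed-out version of the paper's own argument. The paper invokes compactness of $P(X)$ in the Wasserstein/weak topology to extract a weak limit $\alpha$, and then says only that ``a straightforward argument in the spirit of Littlewood's principles using (\ref{abbelow})'' yields $a_0\,\mu(E)\leq\alpha(E)\leq a_0^{-1}\mu(E)$ for all measurable $E$; you have supplied exactly those details --- rewriting (\ref{abbelow}) as a uniform two-sided bound on $d\alpha_N/d\mu_N$, passing to the limit against nonnegative continuous test functions, and then using regularity of Radon measures.

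Your fourth paragraph, however, should be dropped. First, it is unnecessary: the paper claims and proves only the existence of \emph{a} weak limit (via compactness), not full-sequence convergence; the subsequent claims in Step~3 are implicitly taken along a common subsequence. Second, the argument you sketch is circular: you invoke $\beta_N\to\hat\beta$ as ``already established'', but that convergence is the conclusion of the main theorem (Step~5 of the outline), which in turn uses the present claim as part of Step~3. Third, the marginals do not line up as you assert: the plans $\pi_N$ of (\ref{maxers}) pair $\alpha_N$ with $\nu$, so any weak limit has right marginal $\nu$, not $\hat\beta$; the rescaled plans $\pi_N'$ pair $\mu_N$ with $\beta_N$, so their limit has left marginal $\mu$, not $\alpha$. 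Incidentally, the obstacle you single out is not the real one: $c$-cyclical monotonicity of the support is stable under weak limits of plans whenever $c$ is continuous.
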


\begin{proof}
On the compact space $X$, the Wasserstein metric is compact, and equivalent to
weak topology, so there is a weak limit $\alpha.$ \ \ A straightforward
argument in the spirit of Littlewood's principles using (\ref{abbelow}) shows
that for all measurable $E,$
\[
a_{0}\mu(E)\leq\alpha\left(  E\right)  \leq\frac{1}{a_{0}}\mu(E).
\]

\end{proof}

\bigskip Next we consider the convergence of $\varphi_{N}.$ \ In order to
extend these to $X,$ we denote%
\[
\bar{\varphi}_{N}=\max\left\{  \psi\in\mathcal{K}_{Y}:\psi_{|_{P}}\leq
\varphi_{N|_{P}}\right\}  .
\]

\begin{claim}
The functions $\bar{\varphi}_{N}$ converge uniformly on compact subsets of
$X.$
\end{claim}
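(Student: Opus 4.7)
The plan is to combine an Arzel\`a--Ascoli compactness argument with uniqueness of Brenier potentials. First I would show each $\bar{\varphi}_{N}$ is uniformly Lipschitz. Since $\bar{\varphi}_{N}\in\mathcal{K}_{Y}$, there exists $\psi_{N}:Y\rightarrow\bar{\mathbb{R}}$ with $\bar{\varphi}_{N}(x)=\sup_{y\in Y}(\langle x,y\rangle-\psi_{N}(y))$, so at every point $\partial\bar{\varphi}_{N}(x)\subset Y$; hence $|\nabla\bar{\varphi}_{N}|\leq R:=\sup_{y\in Y}|y|$ almost everywhere, and each $\bar{\varphi}_{N}$ is globally $R$-Lipschitz on $\mathbb{R}^{n}$ independent of $N$.

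Next I would pin down a uniform $L^{\infty}$ bound on $X$. The normalization $\varphi_{N}\in\mathcal{K}_{Y}(P_{N})_{0}$ gives $\min_{P_{N}}\varphi_{N}=0$, and by Lemma 2.2 of \cite{BCMO} we have $\bar{\varphi}_{N}=\varphi_{N}$ on $P_{N}$. Thus there is a point $p_{N}^{\star}\in P_{N}\subset X$ with $\bar{\varphi}_{N}(p_{N}^{\star})=0$, and the Lipschitz bound yields $|\bar{\varphi}_{N}(x)|\leq R\cdot\mathrm{diam}(X)$ uniformly on $X$ (with analogous bounds on any fixed compact subset of $\mathbb{R}^{n}$). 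Arzel\`a--Ascoli then produces a subsequence converging uniformly on compact sets to some convex, $R$-Lipschitz limit $\bar{\varphi}\in\mathcal{K}_{Y}$.

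To promote this subsequential convergence to convergence of the entire sequence, I would argue that every subsequential limit coincides. Along any convergent subsequence, weak compactness of $P(Y)$ lets me extract a further subsequence with $\beta_{N}\rightharpoonup\beta$ for some $\beta\in P(Y)$; passing to the limit in the optimal transport relation between $\mu_{N}$ and $\beta_{N}$ shows $\bar{\varphi}$ is a Brenier potential for the transport from $\mu$ to $\beta$. The argument of Step 5 then forces $\beta=\hat{\beta}$ regardless of the subsequence, and since $\mu$ and $\hat{\beta}$ are absolutely continuous with densities bounded above and away from zero (by the $L^{\infty}$ and H\"{o}lder bounds of Step 1), Caffarelli's strict convexity theorem makes the Brenier potential unique up to an additive constant, fixed in the limit by the normalization $\min_{P_{N}}\bar{\varphi}_{N}=0$. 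Thus every subsequential limit agrees, and the full sequence converges uniformly on compact subsets of $X$.

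I expect this final uniqueness step to be the main obstacle, since it is not purely internal to Step 3: it leans on the Step 5 identification $\beta=\hat{\beta}$ and on Caffarelli's strict convexity to rule out distinct subsequential limits sharing the same target. The preceding compactness and normalization arguments are routine once the Legendre-transform description of $\mathcal{K}_{Y}$ is in hand, but the clean identification of the limiting potential is where the bulk of the transport-theoretic input is spent.
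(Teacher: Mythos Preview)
Your compactness argument (the uniform Lipschitz bound from $\partial\bar{\varphi}_N\subset Y$, the $L^\infty$ bound from the normalization, and Arzel\`a--Ascoli) is exactly what the paper means by ``$(\mathcal{K}_Y)_0$ is compact''; the paper's proof is literally that one line, so on this part you and the paper agree, with yours simply more explicit.

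Where you diverge is in your Step~4. The paper does \emph{not} establish uniqueness of the limit inside this claim: its one-line proof yields only subsequential convergence, and the remainder of Step~3, Step~4, and Step~5 are implicitly run along a fixed convergent subsequence, with the identification $\beta=\hat{\beta}$ appearing only at the very end. Your attempt to upgrade to full-sequence convergence here is logically sound but, as you correctly flag, it forward-references the entire Step~4/Step~5 machinery, so it cannot be self-contained within Step~3. In that sense your proof is more ambitious than the paper's, not different in method. One small simplification: once you have identified the subsequential limit $\beta$ as $\hat\beta$ and know that $\bar\varphi$ is convex with $(\nabla\bar\varphi)_{\#}\mu=\hat\beta$, Brenier's uniqueness theorem already pins down $\bar\varphi$ up to a constant; you do not need Caffarelli's strict convexity for this step (Caffarelli is used elsewhere in Step~3, to force the Laguerre diameters to vanish, but not for potential uniqueness).
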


\begin{proof}
This follows from the fact that $\left(  \mathcal{K}_{Y}\right)  _{0}$ is
compact: $\ $The space of convex functions with subgradients in a bounded set
is compact up to addition of a constant.
\end{proof}

\begin{claim}
Let $\varphi=\lim\bar{\varphi}_{N}.$ \ Then
\[
\left(  \nabla\varphi\right)  _{\#}\alpha=\nu.
\]

\end{claim}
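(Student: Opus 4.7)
The plan is to identify the limit $\varphi$ with the Brenier potential transporting $\alpha$ to $\nu$, using the optimality of the discrete plans and the stability of $c$-cyclical monotonicity and subdifferentials under suitable limits.

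First I would invoke Proposition \ref{three}: each $\pi_N$ is the optimal transport plan pairing $\alpha_N$ with $\nu$ for the cost $c(x,y)=|x-y|^{2}/2$, and $\mathrm{supp}(\pi_N)$ lies in the graph of the set-valued map $\partial \bar{\varphi}_N$ (since $\partial \bar{\varphi}_N(p_i) = E_i$ by \cite[Lemma 2.2]{BCMO}). Because $X \times Y$ is compact, $\{\pi_N\}$ is tight, so after passing to a subsequence $\pi_N \rightharpoonup \pi$ weakly for some $\pi \in P(X\times Y)$. The left marginal of $\pi$ is $\alpha$ by the previous claim, and the right marginal is $\nu$ since $(\pi_N)_Y = \nu$ for every $N$.

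Next I would show that $\pi$ is concentrated on the graph of $\partial \varphi$. Since $\bar{\varphi}_N \to \varphi$ uniformly on compact subsets of $X$ and each $\bar{\varphi}_N$ is convex, a standard fact of convex analysis gives upper semicontinuity of the subdifferential in the limit: if $(x_N, y_N) \to (x,y)$ with $y_N \in \partial \bar{\varphi}_N(x_N)$, then $y \in \partial \varphi(x)$. Applied to the closed support, this yields $\mathrm{supp}(\pi) \subset \{(x,y) : y \in \partial\varphi(x)\}$. (Equivalently, $c$-cyclical monotonicity is a closed condition under weak limits, and passes from $\pi_N$ to $\pi$, so $\pi$ is an optimal plan between $\alpha$ and $\nu$.)

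Finally I would conclude via Brenier's theorem. By the previous claim, $\alpha \ll \mu \ll \mathcal{L}^n$, so $\alpha$ is absolutely continuous with respect to Lebesgue measure. Hence the convex function $\varphi$ is differentiable $\alpha$-a.e., $\partial \varphi = \{\nabla\varphi\}$ $\alpha$-a.e., and the optimal transport plan from $\alpha$ to $\nu$ is the unique plan induced by the Brenier map $\nabla\varphi$. Therefore $\pi = (\mathrm{id}\times\nabla\varphi)_{\#}\alpha$, and projecting onto $Y$ gives $(\nabla\varphi)_{\#}\alpha=\nu$. Uniqueness of the Brenier plan also implies that the entire sequence (not merely a subsequence) converges, consistent with the uniform convergence already established for $\bar{\varphi}_N$. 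The main technical point is the stability of the subdifferential under uniform convergence at points $x \in X$ where $\varphi$ is differentiable; this is standard on the interior, and the boundary of $X$ is $\alpha$-negligible, so it does not affect the pushforward identity.
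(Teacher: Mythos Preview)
Your argument is correct and complete. It differs from the paper's in the optimality characterization it passes to the limit: the paper works on the \emph{dual} side, writing for each $N$ the equality
\[
\int c\,d\pi_N=\int_Y \varphi_N^{c}\,d\nu-\int_X \varphi_N\,d\alpha_N,
\]
noting that $\varphi_N^{c}$ (being $c$-concave with uniformly bounded Lipschitz constant) converges uniformly along with $\bar\varphi_N$, and then letting $N\to\infty$ to obtain $\int c\,d\pi=\int_Y \varphi^{c}\,d\nu-\int_X \varphi\,d\alpha$; Kantorovich duality then identifies $\varphi$ as the optimal potential between $\alpha$ and $\nu$, whence $(\nabla\varphi)_{\#}\alpha=\nu$. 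You instead stay on the \emph{primal} side: you extract a weak limit $\pi$ of the plans, use the stability of the subdifferential of convex functions under local uniform convergence to get $\operatorname{supp}\pi\subset\operatorname{graph}(\partial\varphi)$, and then invoke the absolute continuity of $\alpha$ and Brenier's uniqueness to conclude $\pi=(\mathrm{id}\times\nabla\varphi)_{\#}\alpha$. Both routes are standard; the paper's avoids discussing convergence of supports and subdifferentials by packaging everything into the single scalar duality identity, while yours is perhaps more geometric and makes the role of Brenier's theorem explicit. Your final remark that uniqueness of the Brenier plan upgrades subsequential convergence of $\pi_N$ to full convergence is a nice addition not spelled out in the paper.
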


\begin{proof}
We consider the dual problem (cf. \cite[Theorem 5.10 (iii)]{Villani}). \ For
each $N,$ we have
\[
\min_{\substack{\pi\in P(X\times Y)\\\pi_{X}=\alpha_{N}\\\pi_{Y}=\nu}}\int
c(x,y)d\pi=\max_{\varphi\in L^{1}(X,\alpha_{N})}\int_{Y}\varphi^{c}%
(y)d\nu-\int_{X}\varphi(x)d\alpha_{N},
\]
which is realized by
\[
\int c(x,y)d\pi_{N}=\int_{Y}\varphi_{N}^{c}(y)d\nu-\int_{X}\varphi
_{N}(x)d\alpha_{N}.
\]
Now the set of cost-transpose functions $\left\{  \varphi^{c};\varphi
:X\rightarrow%
\mathbb{R}
\right\}  $ is also compact up to addition of a constant. \ Thus the functions
$\varphi_{N}^{c}$ converge uniformly as well. Taking all limits and letting
$\pi$ be the weak limit of $\pi_{N},$ we have
\[
\int c(x,y)d\pi=\int_{Y}\varphi^{c}(y)d\nu-\int_{X}\varphi(x)d\alpha.
\]
By duality, it follows that $\varphi$ describes the optimal map between
$\alpha$ and $\nu.$ \ In particular, for $c=\left\vert x-y\right\vert ^{2}/2$
we conclude that $\left(  \nabla\varphi\right)  _{\#}\alpha=\nu.$
\end{proof}

Now, we simply define
\begin{equation}
\beta:=\left(  \nabla\varphi\right)  _{\#}\mu.\label{betdef}%
\end{equation}
Despite knowing less about the regularity of $\beta$ than we did for
$\hat{\beta}$ in Step 2, we may repeat the essential portion of the argument
found in \cite{BCMO}. Caffarelli's strict convexity result \cite{CaffJAMS}
only requires the densities are bounded and bounded away from zero, which is
true by Claim \ref{13} for $\alpha$ and $\nu.$ We conclude that the potential
$\varphi$ is strictly convex and that the Laguerre cells associated to
$\varphi_{N}$ must have vanishing diameters. \ 

\bigskip

\subsection{\bigskip Step 4}

For either $\varphi_{N}$ or $\hat{\varphi}_{N}$, note that for each cell
$E_{i}$ the following holds for $p_{i}$ and any $y_{i}\in E_{i}:$
\[
\left\vert s(p_{i},y_{i})-\frac{1}{\nu(E_{i})}\int_{E_{i}}s(p_{i}%
,y)d\nu(y)\right\vert \leq\text{osc}_{y\in E_{i}}\text{ }s(p_{i},y).
\]

The average value of the cost over any set must be larger than
\[
b_{0}=e^{-\max_{X\times Y}c(x,y)}.
\]
It follows immediately by the fundamental theorem of calculus that \
\[
\left\vert \ln\left(  s(p_{i},y_{i})-\frac{1}{\nu(E_{i})}\int_{E_{i}}%
s(p_{i},y)d\nu(y)\right)  -\ln s(p_{i},y_{i})\right\vert \leq\frac{1}{b_{0}%
}\text{osc}_{y\in E_{i}}\text{ }s(p_{i},y).
\]
Now for any set of choices of $\left\{  y_{i}\in E_{i}\right\}  $
\begin{equation}
\left\vert \frac{1}{N}\sum_{i=1}^{N}\ln\left(  \frac{1}{\nu(E_{i})}\int%
_{E_{i}}s(p_{i},y)d\nu(y)\right)  -\frac{1}{N}\sum_{i=1}^{N}\ln s(p_{i}%
,y_{i})\right\vert \leq\frac{1}{N}\sum_{i=1}^{N}\frac{1}{b_{0}}\text{osc}%
_{y\in E_{i}}\text{ }s(p_{i},y).\label{thuy}%
\end{equation}
The utility function is continuous, so as the diameters of $E_{i}$ shrink, so
does\ the right hand side of (\ref{thuy}). \ On the other hand
\[
\frac{1}{N}\sum_{i=1}^{N}\ln s(p_{i},y_{i})=-\int_{X}c(x_{i},y_{i})d\mu_{N}.
\]
The cost function is continuous, and the values $y_{i}$ are being chosen from
the subgradient of $\nabla\varphi_{N}(p_{i})$, and $\mu_{N}\rightarrow\mu,$ so
we conclude
\begin{equation}
\lim_{N\rightarrow\infty}\frac{1}{N}\sum_{i=1}^{N}\ln\left(  \frac{1}%
{\nu(E_{i})}\int_{E_{i}}s(p_{i},y)d\nu\right)  =-\int_{X}c(x,\nabla
\varphi(x))d\mu(x).\ \ \label{limcw}%
\end{equation}
This proves the first claim in Step 4.

Next, note that%

\begin{align}
-H_{\nu}(\beta_{N}) &  =-\sum_{i=1}^{N}\int_{E_{i}}\ln\left(  \frac{1}{N}%
/\nu(E_{i})\right)  \frac{1}{N}/\nu(E_{i})d\nu\nonumber\\
&  =-\sum_{i=1}^{N}\ln\left(  \frac{1}{N}/\nu(E_{i})\right)  \frac{1}%
{N}\nonumber\\
&  =\frac{1}{N}\sum_{i=1}^{N}\ln\left(  \frac{\nu(E_{i})}{\mu_{N}\left(
p_{i}\right)  }\right)  .\label{limch}%
\end{align}
Thus, using (\ref{limcw}),(\ref{betdef}), recalling (\ref{Fhatdef}), and
(\ref{p3})%
\begin{align*}
\lim_{N\rightarrow\infty}\left(  \tilde{F}_{N}(\varphi_{N})-\hat{F}(\beta
_{N})\right)   &  =\lim_{N\rightarrow\infty}\left(  \frac{1}{N}\ln\left(
\frac{1}{\nu(E_{i})}\int_{E_{i}}s(p_{i},y)d\nu\right)  +W^{2}(\mu,\beta
_{N})\right)  \\
&  =-\int c(x,\nabla\varphi(x))d\mu+\lim_{N\rightarrow\infty}W^{2}(\mu
,\beta_{N})\\
&  =-W^{2}(\mu,\left(  \nabla\varphi\right)  _{\#}\mu)+W^{2}(\mu,\beta)\\
&  =0.
\end{align*}
This proves (\ref{bbb}). \ A nearly identical argument proves (\ref{bbb2}).

\section{A\ Fourth Order PDE and smoothness of solutions}

\bigskip

In this section we argue that the function $\varphi$ satisfies an elliptic
quasilinear fourth order PDE and enjoys derivative\bigskip\ estimates of all
orders. 

Given a smooth probability measure $\mu$ on $X,$ one can parameterize the
space of probability measures on $Y$ by convex potentials $\varphi$ on $X:$
\ For any $\beta,$ we can solve the optimal transportation problem pairing
$\mu$ with $\beta,$ obtaining $\varphi$ such that
\begin{equation}
\beta=\left(  \nabla\varphi\right)  _{\#}\mu.\label{pYp}%
\end{equation}
On the other hand, for any convex $\varphi$ the subgradient mapping defines a
probability measure via (\ref{pYp}). \ \ \ Using this, we can insert $\varphi$
into the functional $\hat{F}:$%
\begin{align*}
\hat{F}(\varphi) &  =-\int_{Y}\ln\left(  \frac{d\beta}{d\nu}(y)\right)
\frac{d\beta}{d\nu}(y)d\nu(y)-\frac{1}{2}\int_{X}\left\Vert x-\nabla
\varphi(x)\right\Vert ^{2}d\mu(x).\\
&  =-\int_{Y}\ln\left(  \frac{d\beta}{d\nu}(y)\right)  d\beta(y)-\frac{1}%
{2}\int_{X}\left\Vert x-\nabla\varphi(x)\right\Vert ^{2}d\mu(x).
\end{align*}
Now $\nabla\varphi$ is a change of measure, so%
\begin{align*}
\hat{F}(\varphi) &  =-\int_{X}\ln\left(  \frac{d\beta}{d\nu}\left(
\nabla\varphi(x)\right)  \right)  d\mu(x)-\frac{1}{2}\int_{X}\left\Vert
x-\nabla\varphi(x)\right\Vert ^{2}d\mu(x)\\
&  =-\int_{X}\ln\left(  \frac{\mu(x)}{\det(D^{2}\varphi(x))\nu(\nabla
\varphi(x))}\right)  d\mu(x)-\frac{1}{2}\int_{X}\left\Vert x-\nabla
\varphi(x)\right\Vert ^{2}d\mu(x).\\
&  =-\int_{X}\left[  \ln\mu(x)-\ln\det(D^{2}\varphi(x))-\ln\nu(\nabla
\varphi(x))\right]  d\mu(x)-\frac{1}{2}\int_{X}\left\Vert x-\nabla
\varphi(x)\right\Vert ^{2}d\mu(x).
\end{align*}
Now consider a compactly supported variation:
\[
\varphi_{t}=\varphi(x)+t\eta(x)
\]
for some compactly supported smooth test function $\eta.$ \ \ We compute
\begin{align*}
\frac{d\hat{F}(\varphi_{t})}{dt}|_{t=0} &  =-\int_{X}\left[  -\varphi^{ij}%
\eta_{ij}-\frac{1}{\nu(\nabla\varphi)}\nabla\nu(\nabla\varphi)\cdot\nabla
\eta\right]  d\mu-\int_{X}\left(  x-\nabla\varphi\right)  \cdot\nabla\eta
d\mu\\
&  =\int_{X}\left\{  \partial_{i}\partial_{j}\left(  \mu\varphi^{ij}\right)
+\operatorname{div}\left(  -\mu\frac{1}{\nu(\nabla\varphi)}\nabla\nu
(\nabla\varphi)+\mu\left(  x-\nabla\varphi\right)  \right)  \right\}  \eta dx.
\end{align*}
Here $\varphi^{ij}$ is the inverse of the Hessian matrix $\varphi_{ij}.$ \ Now
if the measure $\beta=\left(  \nabla\varphi\right)  _{\#}\mu$ is a maximizer,
any compactly supported variation will not change the functional to first
order, so we have an Euler-Lagrange equation:
\[
\partial_{i}\partial_{j}\left(  \mu\varphi^{ij}\right)  +\operatorname{div}%
\left(  \mu\frac{1}{\nu(\nabla\varphi(x))}\nabla\nu(\nabla\varphi
(x))+\mu\left(  x-\nabla\varphi\right)  \right)  =0.
\]
This becomes an equation on $\det(D^{2}\varphi(x)):$ \ Write the first term as%

\[
\partial_{i}\partial_{j}\left(  \mu\varphi^{ij}\right)  =\partial_{i}%
\partial_{j}\left(  \mu\frac{C_{\varphi}^{ij}}{\det D^{2}\varphi(x)}\right)
\]
where%
\[
C_{\varphi}^{ij}=\det D^{2}\varphi(x)\varphi^{ij}%
\]
is the (divergence-free) cofactor matrix. \ \ We see%
\[
\partial_{i}\partial_{j}\left(  \mu\varphi^{ij}\right)  =L\left(  \frac
{\mu(x)}{\det D^{2}\varphi(x)}\right)
\]
where
\[
L=C^{ij}\partial_{i}\partial_{j}%
\]
the equation becomes
\begin{equation}
L\left(  \frac{\mu(x)}{\det D^{2}\varphi(x)}\right)  =\operatorname{div}%
\left(  \mu(x)\frac{1}{\nu(\nabla\varphi(x))}\nabla\nu(\nabla\varphi
(x))-\mu(x)\left(  x-\nabla\varphi(x)\right)  \right)  .\label{qlin}%
\end{equation}
Now we determined in Lemma \ref{holder} that the density $\frac{d\hat{\beta}%
}{d\nu}$ must be H\"{o}lder continuous. \ It follows that the potential
$\varphi$ satisfying
\[
\det(D^{2}\varphi(x))=\frac{\mu(x)}{\hat{\beta}(\nabla\varphi(x))}%
\]
will be $C^{2,\alpha}$, $\ $with estimates on any interior set \cite{CaffJAMS}%
. \ In particular, the cofactor matrix defining $L$ is uniformly elliptic.
\ Thus the equation (\ref{qlin}) is uniformly elliptic with  H\"{o}lder
coefficients. Also note that because $\varphi\in C^{2,\alpha}$, the right hand
side of (\ref{qlin}) is $C^{0,\alpha}.$ \ Thus we can apply the classical
Schauder theory \cite[Theorem 6.19]{GT}  and conclude that $\det D^{2}%
\varphi(x)$ is itself $C^{2,\alpha}.$ \ \ Repeating these two steps gives
estimates of arbitrarily high order.

\bibliographystyle{plain}
\bibliography{nash}

\begin{thebibliography}{10}

\bibitem{BCMO}
Jean-David Benamou, Guillaume Carlier, Quentin M\'erigot, and \'Edouard Oudet.
\newblock Discretization of functionals involving the {M}onge-{A}mp\`ere
  operator.
\newblock {\em Numer. Math.}, 134(3):611--636, 2016.

\bibitem{BRW}
Ken Binmore, Ariel Rubinstein, and Asher Wolinsky.
\newblock The {N}ash bargaining solution in economic modelling.
\newblock {\em Rand J. Econom.}, 17(2):176--188, 1986.

\bibitem{CaffJAMS}
Luis~A. Caffarelli.
\newblock The regularity of mappings with a convex potential.
\newblock {\em J. Amer. Math. Soc.}, 5(1):99--104, 1992.

\bibitem{CS}
de~la Plata~J Carrillo and F~Santambrogio.
\newblock ${L}^{\infty}$ estimates for the jko scheme in parabolic-elliptic
  keller-segel systems.
\newblock {\em Quarterly of Applied Mathematics}.

\bibitem{Donaldson}
S.~K. Donaldson.
\newblock Interior estimates for solutions of {A}breu's equation.
\newblock {\em Collect. Math.}, 56(2):103--142, 2005.

\bibitem{GT}
David Gilbarg and Neil~S. Trudinger.
\newblock {\em Elliptic partial differential equations of second order}.
\newblock Classics in Mathematics. Springer-Verlag, Berlin, 2001.
\newblock Reprint of the 1998 edition.

\bibitem{Le}
Nam~Q. Le.
\newblock {$W^{4,p}$} solution to the second boundary value problem of the
  prescribed affine mean curvature and {A}breu's equations.
\newblock {\em J. Differential Equations}, 260(5):4285--4300, 2016.

\bibitem{Lee}
Paul W.~Y. Lee.
\newblock On the {J}ordan-{K}inderlehrer-{O}tto scheme.
\newblock {\em J. Math. Anal. Appl.}, 429(1):131--142, 2015.

\bibitem{Muthoo}
Abhinay Muthoo.
\newblock {\em Bargaining Theory with Applications}.
\newblock Cambridge University Press, New York, NY, USA, 1999.

\bibitem{Nash}
John~F. Nash, Jr.
\newblock The bargaining problem.
\newblock {\em Econometrica}, 18:155--162, 1950.

\bibitem{Schumacher}
Johannes~M. Schumacher.
\newblock A multi-objective interpretation of optimal transport.
\newblock {\em arXiv:1703.00289}.

\bibitem{Villani}
C\'edric Villani.
\newblock {\em Optimal transport}, volume 338 of {\em Grundlehren der
  Mathematischen Wissenschaften [Fundamental Principles of Mathematical
  Sciences]}.
\newblock Springer-Verlag, Berlin, 2009.
\newblock Old and new.

\end{thebibliography}


\begin{thebibliography}{}

\end{thebibliography}

\end{document}